\renewcommand{\sqrt}[1]{\left( #1 \right)^\frac12}
\newcommand{\e}[0]{\mathrm{e}}
\newcommand{\E}[0]{\mathbb{E}}
\newcommand{\T}[0]{\mathbb{T}}
\newcommand{\N}[0]{\mathbb{N}}
\newcommand{\R}[0]{\mathbb{R}}
\newcommand{\Z}[0]{\mathbb{Z}}
\renewcommand{\d}[0]{\mathrm{d}}
\newcommand{\sgn}[0]{\mathrm{sgn}}
\renewcommand{\coprod}{\bigsqcup}
\newcommand{\p}[1]{\left(#1\right)}
\newcommand{\id}{\mathrm{id}}
\newcommand{\honeall}{H^1_{\mathrm{all}}}
\newcommand{\honelast}{H^1_{\mathrm{last}}}
\newcommand{\geqlast}{\geq_{\mathrm{last}}}
\newcommand{\beq}{\begin{equation}}
\newcommand{\eeq}{\end{equation}}
\newcommand{\supp}[0]{\mathrm{supp}\,}
\newcommand{\clspan}[0]{\overline{\mathrm{span}}}
\author{Maciej Rzeszut}
\email{maciej.rzeszut@gmail.com}
\title{Disjointification inequalities for Hoeffding subspaces of $H^1$ on an infinite polydisc}
\newtheorem{thm}{Theorem}
\newtheorem{lem}[thm]{Lemma}
\newtheorem{cor}[thm]{Corollary}
\newtheorem{prop}[thm]{Proposition}
\numberwithin{thm}{section}
\numberwithin{equation}{section}
\begin{document}
\begin{abstract}
We prove that the $L^1$ norm on the linear span of functions on $\T^\N$ dependent on $m$ variables and analytic and mean zero in each of them can be expressed as an interpolation sum of $H^1\p{\T^S,\ell^1\p{\N^S,H^2\p{\T^{[1,m]\setminus S},\ell^2\p{\N^{[1,m]\setminus S}}}}}$ norms over $S\subseteq [1,m]$ and derive some interpolation consequences. 
\end{abstract}
\maketitle
\section{Introduction}
We will use $\T:=\R/\Z$ as a model for the circle group. For $I\subset \N$ (finite or infinite), the following subspaces of $L^1\p{\T^I}$ will be of interest.
\begin{align} U^1_m\p{\T^I}&=\clspan\left\{\e^{2\pi i \langle n,t\rangle}:\left|\supp n\right|=m\right\}\\
H^1_{\mathrm{all}}\p{\T^I}&= \clspan\left\{\e^{2\pi i\langle n,t\rangle}:n_j\geq 0\text{ for all }j\in I\right\}\\
H^1_{\mathrm{last}}\p{\T^I}&= \clspan\left\{\e^{2\pi i\langle n,t\rangle}:n_j\geq 0\text{ for }j=\max\supp n\right\}\end{align}
In \cite{diss} we proved a decomposition theorem for functions in $U^1_m\p{\T^\N}$. Here we are going to prove that the summands of the decomposition can be chosen to be analytic if the function belongs to $U^1_m\p{\T^\N}\cap \honeall\p{\T^\N}$. \par
Let us understand the case $m=1$ first. There are two fundamental facts that give the desired result when put together. 

\begin{thm}[Johnson, Schechtman \cite{JSch}] 
Let $f_i\in L^1\p{\T}$ be of mean zero. Then there exist $g_i\in L^1\p{\T}$, $h_i\in L^2\p{\T}$ of mean zero such that $f_i=g_i+h_i$ and
\beq \label{eq:intrjsch} \int_{\T^\N}\d x\left|\sum_i f_i\p{x_i}\right| \simeq \int_{\T^\N}\d x\sqrt{\sum_i \left|f_i\p{x_i}\right|^2}\gtrsim \sum_i \int_\T \d \xi \left|g\p{\xi}\right| +\sqrt{\sum_i \int_\T \d \xi \left|h_i\p{\xi}\right|^2}.\eeq
\end{thm}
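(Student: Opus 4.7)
The statement contains two claims; I would prove them in turn. The first, $\E\left|\sum_if_i(x_i)\right|\simeq\E\sqrt{\sum_i|f_i(x_i)|^2}$, is the classical $L^1$ square-function equivalence for a sum of independent mean-zero random variables. A short proof is symmetrization by independent Rademachers (at the cost of an absolute constant) followed by the fiberwise Khintchine inequality; equivalently it is Burkholder--Gundy for the martingale of partial sums. I would simply invoke this.

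For the second inequality I propose a truncation. Pick thresholds $\lambda_i>0$ to be specified and set $g_i:=f_i\mathbf{1}_{\{|f_i|>\lambda_i\}}-c_i$, $h_i:=f_i\mathbf{1}_{\{|f_i|\leq\lambda_i\}}+c_i$ with $c_i:=\int_\T f_i\mathbf{1}_{\{|f_i|>\lambda_i\}}$, so that both $g_i,h_i$ have mean zero. I would pick the $\lambda_i$ so that the tail events $A_i:=\{|f_i(x_i)|>\lambda_i\}$ are on the borderline of disjointness, $\sum_i\mathbb{P}(A_i)=O(1)$, while simultaneously $\sum_i\n{h_i}_{L^2}^2\gtrsim\max_i\lambda_i^2$. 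Independence of the coordinates then guarantees that with probability bounded below at most one $A_i$ occurs; on such configurations $\sqrt{\sum_j|f_j(x_j)|^2}\geq|f_i(x_i)|\mathbf{1}_{A_i}$, and averaging yields $\sum_i\int_\T|f_i|\mathbf{1}_{\{|f_i|>\lambda_i\}}\lesssim\E\sqrt{\sum_j|f_j(x_j)|^2}$, which bounds both $\sum_i\n{g_i}_{L^1}$ and the correction $\sum_i|c_i|$.

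For the $\ell^2$ piece, let $X:=\sum_ih_i(x_i)^2$. Independence and $|h_i|\lesssim\lambda_i$ give $\E X^2\lesssim(\E X)^2+\max_i\lambda_i^2\cdot\E X\lesssim(\E X)^2$ by the second threshold condition, so the Paley--Zygmund inequality yields $\E\sqrt X\gtrsim\sqrt{\E X}=\sqrt{\sum_i\n{h_i}_{L^2}^2}$; combined with the pointwise bound $\sqrt X\lesssim\sqrt{\sum_j|f_j(x_j)|^2}+\sqrt{\sum_ic_i^2}$ and the $\ell^1$ control of $\sum_i|c_i|$, this closes the argument. The main obstacle is the simultaneous calibration of $\{\lambda_i\}$: they must be sparse enough to make the tail events essentially disjoint yet not so sparse that the variance bound $\sum_i\n{h_i}_{L^2}^2\gtrsim\max_i\lambda_i^2$ fails. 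A case split depending on whether the total variance is dominated by a single $f_i$ or spread over many, together with the short check that the $c_i$ corrections are absorbed by the heavy-tail term already controlled above, completes the proof.
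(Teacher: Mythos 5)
The paper does not prove this theorem; it quotes it as a black box from Johnson and Schechtman \cite{JSch}, so there is no in-paper proof to compare against. Your sketch is essentially the standard argument: square-function equivalence via symmetrization and Khintchine, then truncation to split off the heavy tails, ``one big jump'' to control the $\ell^1(L^1)$ part, and Paley--Zygmund to extract the $\ell^2(L^2)$ part. The structure is correct, and you have honestly flagged where the calibration needs care. A few comments to tighten it.

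First, a single common threshold $\lambda$ is the natural choice (take $\lambda=\inf\{t:\sum_i\mathbb{P}(|f_i|>t)\le \tfrac12\}$); individual $\lambda_i$'s buy nothing and make the disjointness and Paley--Zygmund bookkeeping awkward, since $\max_i\lambda_i$ would then appear with no control. Second, your lower bound on $\mathbb{P}(\text{at most one }A_i\text{ occurs})$ implicitly needs each $\mathbb{P}(A_i)$ bounded away from $1$; with the $\le\tfrac12$ normalization this is automatic ($\mathbb{P}(A_i)\le\tfrac12$, so $\prod_{j\ne i}(1-\mathbb{P}(A_j))\ge e^{-2\sum_j\mathbb{P}(A_j)}\ge e^{-1}$), and the disjointness of the events $A_i\cap\bigcap_{j\ne i}A_j^c$ then gives $\sum_i\E[|f_i|\mathbf{1}_{A_i}]\lesssim\E\max_j|f_j|\le\E\sqrt{\sum_j|f_j|^2}$. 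Third, the ``second threshold condition'' $\sum_i\n{h_i}_2^2\gtrsim\lambda^2$ is genuinely not automatic, but the resolution of your case split is simpler than your phrasing (``dominated by a single $f_i$'' vs. ``spread out'') suggests: if $\sum_i\n{h_i}_2^2<\lambda^2$ there is nothing left to prove, because the definition of $\lambda$ forces $\sum_i\mathbb{P}(|f_i|\ge\lambda)\ge\tfrac12$, hence $\mathbb{P}(\max_j|f_j|\ge\lambda)\ge 1-e^{-1/2}$ and $\E\sqrt{\sum_j|f_j|^2}\ge\E\max_j|f_j|\gtrsim\lambda>\sqrt{\sum_i\n{h_i}_2^2}$. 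Finally, the observation $|c_i|=|\E[f_i\mathbf{1}_{|f_i|\le\lambda}]|\le\lambda$ (using mean zero) is what makes $|h_i|\lesssim\lambda$ clean, and $\sqrt{\sum_ic_i^2}\le\sum_i|c_i|\le\sum_i\E[|f_i|\mathbf{1}_{A_i}]$ absorbs the correction terms into the already-controlled tail sum. With these adjustments your outline becomes a complete proof.
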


The fact that the rightmost expression depends only on $H^1$ norm of $g_i$ and $H^2$ norm of $h_i$ allows us to plug the decomposition into the scalar valued particular case of the following.

\begin{thm}[Pisier \cite{pisierinterp}, Bourgain \cite{bourginterp}]
The couple 
\beq \p{H^1\p{\T,\ell^2},H^2\p{\T,\ell^2}}\eeq
is $K$-closed in
\beq \p{L^1\p{\T,\ell^2},L^2\p{\T,\ell^2}}.\eeq
\end{thm}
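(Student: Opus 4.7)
The plan is to apply Kisliakov's outer-function technique. Given $F\in H^1\p{\T,\ell^2}+H^2\p{\T,\ell^2}$ with a decomposition $F=g+h$ for $g\in L^1\p{\T,\ell^2}$ and $h\in L^2\p{\T,\ell^2}$, I would construct a scalar outer function $\phi\in H^\infty\p{\T}$ whose boundary modulus is $|\phi|=\lambda/\max\p{\lambda,\n{g}_{\ell^2}}$ for a parameter $\lambda>0$ to be chosen. Setting $w:=-\log|\phi|=\log^+(\n{g}_{\ell^2}/\lambda)\ge 0$, log-integrability of $|\phi|$ reduces to $w\in L^1(\T)$, which follows from the pointwise bound $w\le\n{g}_{\ell^2}/\lambda$ and yields $\n{w}_{L^1(\T)}\le\n{g}_{L^1\p{\ell^2}}/\lambda$.

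Set $\tilde h:=\phi F$ and $\tilde g:=(1-\phi)F$; both are analytic (products with $\phi,1-\phi\in H^\infty$) and sum to $F$. Since $\n{\phi g}_{L^\infty\p{\ell^2}}\le\lambda$ while $|\phi|\le 1$, the product $\phi F$ lies in $L^2\p{\T,\ell^2}$, hence $\tilde h\in H^2\p{\T,\ell^2}$ with $\n{\tilde h}_{L^2\p{\ell^2}}\le\lambda+\n{h}_{L^2\p{\ell^2}}$. For $\tilde g$, decompose $\tilde g=(1-\phi)g+(1-\phi)h$: trivially $\n{(1-\phi)g}_{L^1\p{\ell^2}}\le 2\n{g}_{L^1\p{\ell^2}}$, and Cauchy--Schwarz gives $\n{(1-\phi)h}_{L^1\p{\ell^2}}\le\n{1-\phi}_{L^2(\T)}\cdot\n{h}_{L^2\p{\ell^2}}$.

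The core quantitative estimate is $\n{1-\phi}_{L^2(\T)}^2\le 2\n{w}_{L^1(\T)}\le 2\n{g}_{L^1\p{\ell^2}}/\lambda$. Expanding, $\n{1-\phi}_{L^2}^2=1-2\Re\phi(0)+\n{\phi}_{L^2}^2$. The outer structure of $\phi$ gives $\phi(0)=e^{-\int w}$ (real, positive), so $e^{-x}\ge 1-x$ yields $\Re\phi(0)\ge 1-\n{w}_{L^1}$. Combined with $\n{\phi}_{L^2}^2=\int e^{-2w}\le 1$, this proves the bound.

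Choosing $\lambda$ of order $(\n{g}_{L^1\p{\ell^2}}\n{h}_{L^2\p{\ell^2}}^2/t^2)^{1/3}$ balances the two error terms via weighted AM--GM, yielding $K(t,F;H^1,H^2)\lesssim K(t,F;L^1,L^2)$ for every $t>0$. The main technical point is the $L^2$-control of $1-\phi$: it crucially uses the analytic (outer) structure of $\phi$---specifically the mean-value identity $\phi(0)=e^{-\int w}$---rather than the trivial $|1-\phi|\le 2$.
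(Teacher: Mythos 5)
The paper cites this as a known theorem of Pisier and Bourgain and supplies no proof, so there is nothing in the text to compare your argument against. That said, your proof is a correct and complete rendition of Kislyakov's outer-function technique, which is the standard elementary way to establish $K$-closedness results of this type. All the steps check: the pointwise bound $w=\log^+\bigl(\|g\|_{\ell^2}/\lambda\bigr)\le \|g\|_{\ell^2}/\lambda$ gives the log-integrability needed to build the outer $\phi$, with the quantitative control $\|w\|_{L^1}\le \|g\|_{L^1(\ell^2)}/\lambda$; the products $\phi F$ and $(1-\phi)F$ are automatically analytic (e.g.\ $\phi F\in H^1\cap L^2=H^2$), since $\phi,1-\phi\in H^\infty$ and $F\in H^1(\T,\ell^2)+H^2(\T,\ell^2)\subset H^1(\T,\ell^2)$; the outer structure gives the mean-value identity $\phi(0)=e^{-\|w\|_{L^1}}$, which combined with $e^{-x}\ge1-x$ and $\|\phi\|_{L^2}^2\le1$ yields $\|1-\phi\|_{L^2}^2=1-2\phi(0)+\|\phi\|_{L^2}^2\le2\|w\|_{L^1}$; and the choice $\lambda\sim\bigl(\|g\|_{L^1(\ell^2)}\|h\|_{L^2(\ell^2)}^2/t^2\bigr)^{1/3}$ balances $t\lambda$ against $\sqrt{\|g\|_{L^1(\ell^2)}/\lambda}\,\|h\|_{L^2(\ell^2)}$ so that weighted AM--GM gives $K(t,F;H^1,H^2)\lesssim\|g\|_{L^1(\ell^2)}+t\|h\|_{L^2(\ell^2)}$; taking the infimum over decompositions $F=g+h$ completes the argument. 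It is worth emphasizing, as you do, that the $\ell^2$-valued case costs nothing here: the only function that must be built analytically, $\phi$, is scalar-valued, so no lattice or UMD hypothesis is needed. This is precisely why this special case is genuinely more elementary than the general Kislyakov--Xu theorem that the paper invokes later for $E_{A,B}(\mathcal{H})$-valued spaces.
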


Therefore, for any $f(x)=\sum_i f_i\p{x_i}$ with $f_i\in H^1\p{\T}$ of mean zero, there are $g_i, h_i\in H^1\p{\T}$ of mean zero such that $f_i=g_i+h_i$ and
\beq \left\|f\right\|_{U^1_1\p{\T^\N}}\gtrsim \sum_i \left\|g_i\right\|_{H^1\p{\T}} + \sqrt{\sum_i \left\|h_i\right\|_{H^2\p{\T}}^2}.\eeq
In the multivariate case, we do not hope for a similar expression in terms of norms of analytic functions of $m$ variables. Instead, we will find a decomposition of $U^1_m$ norm into an interpolation sum of norms that involve only integration over at most $m$ variables at a time. Our result is obtained by means of decoupling methods and iteration of univariate version and the bulk of the article is devoted to developing necessary machinery to make the induction step possible. \par
The main tool is an extension of a particular case of a theorem of Kislyakov and Xu, concerning interpolation of lattice-valued Hardy spaces. 

In Section 2, we will point out the techincal differences necessary to transfer the original proof to our case of $H^1_{\mathrm{last}}$. We also note two following known propositions. The first one is a straightforward vector-valued analogue of the known square function theorem for $L^1$ Hardy martingales and the second is meant to alleviate measurability problems.
\begin{prop}\label{diagproj}
Let $\E_{\{n\}}$ denote the orthogonal projection from $L^2\p{\T^\N}$ onto functions which depend only on the $n$-th coordinate and $\E_{\{n\}}\otimes \id$ the natural extension to $L^2\p{\T^\N,\mathcal{H}}$. Then the map
\beq \p{f_n}_{n\in \N}\mapsto \p{\p{\E_{\{n\}}\otimes \id}f_n}_{n\in \N}\eeq
is bounded on $\honelast\p{\T^\N,\ell^2\p{\N,\mathcal{H}}}$.
\end{prop}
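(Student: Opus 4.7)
The plan is to use the vector-valued Hardy martingale square function characterization of $\n{\cdot}_{\honelast\p{\T^\N,\ell^2\p{\N,\mathcal{H}}}}$, compute the square function of the image sequence explicitly, and reduce the desired boundedness to a diagonal estimate which I handle via the Johnson-Schechtman inequality combined with decoupling of Hardy martingales.

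First I equip $\T^\N$ with the natural filtration $\mathcal{F}_k=\sigma\p{x_1,\ldots,x_k}$ and let $d_k$ denote the corresponding martingale difference operator. The $\ell^2\p{\mathcal{H}}$-valued version of the $L^1$ Hardy square function theorem (a standard Khintchine-Kahane extension of the scalar statement mentioned immediately above the proposition) gives
\[ \n{F}_{\honelast\p{\T^\N,\ell^2\p{\N,\mathcal{H}}}}\simeq \int_{\T^\N}\p{\sum_{k,n}\n{d_k f_n}_\mathcal{H}^2}^{1/2}\d x \]
for $F=\p{f_n}_n$, with the analogous expression for the image $G:=\p{\p{\E_{\{n\}}\otimes\id}f_n}_n$. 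Since each $g_n$ depends only on $x_n$, its only nonzero martingale differences are $d_0 g_n=\E f_n$ and $d_n g_n=g_n-\E f_n=\E^{([1,n-1])}\p{d_n f_n}$, where $\E^{([1,n-1])}$ stands for integration in $x_1,\ldots,x_{n-1}$. Hence
\[ \sum_{k,n}\n{d_k g_n}_\mathcal{H}^2=\sum_n\n{\E f_n}_\mathcal{H}^2+\sum_n\n{\E^{([1,n-1])}(d_n f_n)}_\mathcal{H}^2, \]
and the triangle inequality for the square root together with Minkowski's integral inequality in $\ell^2\p{\mathcal{H}}$ reduces the problem to controlling the "diagonal" piece $\int_{\T^\N}\p{\sum_n\n{h_n}_\mathcal{H}^2}^{1/2}\d x$, where $h_n:=\E^{([1,n-1])}\p{d_n f_n}$, by $\n{F}_{\honelast\p{\ell^2}}$.

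For the diagonal piece, since each $h_n$ is mean zero and depends only on $x_n$, the $\mathcal{H}$-valued version of the Johnson-Schechtman theorem (a straightforward Hilbert extension of the scalar version stated in the introduction) converts the integral into $\n{\sum_n h_n}_{L^1\p{\T^\N,\mathcal{H}}}$. The key observation is that $\sum_n h_n\p{x_n}$ is the conditional expectation, given the diagonal variables $\p{x_n}_n$, of the decoupled sum $\sum_n\widetilde{d_n f_n}$ obtained from $\sum_n d_n f_n$ by replacing in each $d_n f_n$ the past coordinates $\p{x_1,\ldots,x_{n-1}}$ with independent copies. A decoupling inequality for Hardy martingales combined with Jensen's inequality then gives
\[ \n{\sum_n h_n}_{L^1\p{\T^\N,\mathcal{H}}}\lesssim\n{\sum_n d_n f_n}_{\honelast\p{\T^\N,\mathcal{H}}}\simeq\int_{\T^\N}\p{\sum_n\n{d_n f_n}_\mathcal{H}^2}^{1/2}\d x, \]
and the last expression is dominated by $\int_{\T^\N}\p{\sum_{k,n}\n{d_k f_n}_\mathcal{H}^2}^{1/2}\d x\simeq\n{F}_{\honelast\p{\ell^2}}$.

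The principal obstacle is the diagonal estimate above: it is effectively the $L^1$-endpoint of Stein's inequality for independent conditional expectations, which would fail for arbitrary sequences but is salvaged here by the positive-Fourier-support (Hardy) structure of the differences $d_n f_n$, exploited through Hardy-martingale decoupling.
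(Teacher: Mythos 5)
The paper does not actually supply a proof of Proposition~\ref{diagproj}: immediately before it the author notes that it is ``a straightforward vector-valued analogue of the known square function theorem for $L^1$ Hardy martingales'' and then records it as a known fact. So there is no ``paper's own argument'' to compare against; I can only assess the proposal on its merits.

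Your overall strategy is sound and is exactly the kind of derivation the paper is alluding to: apply the Hardy-martingale square function theorem to both $F=(f_n)$ and $G=(g_n)$ with $g_n=\p{\E_{\{n\}}\otimes\id}f_n$, compute that the only nontrivial difference of $g_n$ is $d_n g_n=\E^{([1,n-1])}\p{d_n f_n}$, and reduce matters to
\beq\int_{\T^\N}\sqrt{\sum_n\n{\E^{([1,n-1])}\p{d_n f_n}\p{x_n}}_\mathcal{H}^2}\,\d x\lesssim\int_{\T^\N}\sqrt{\sum_{k,n}\n{d_k f_n}_\mathcal{H}^2}\,\d x.\eeq
Your identification of $\sum_n h_n$ as a conditional expectation of a decoupled sum is correct, and Jensen plus decoupling does close the estimate. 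Two remarks. First, the decoupling ingredient should be named precisely: what is needed is the Kwapie\'n--Woyczy\'nski one-sided decoupling $\E\n{\sum_n\tilde\delta_n}\lesssim\E\sup_k\n{\sum_{n\leq k}\delta_n}$ for tangent sequences, valid in any Banach space, together with the fact that for Hardy martingales $\E\sup_k\n{\Phi_k}\simeq\n{\Phi}_{L^1}$ (which is equivalent to the very square function theorem you already invoked); ``a decoupling inequality for Hardy martingales'' as a black box is vague. Second, there is a shorter route that makes the need for Hardy-martingale decoupling disappear: instead of first passing to $\n{\sum_n h_n}_{L^1}$ via Johnson--Schechtman and then applying Jensen, apply Minkowski's integral inequality in $\ell^2\p{\N,\mathcal{H}}$ directly, obtaining the decoupled square function $\int\int\sqrt{\sum_n\n{d_nf_n(y_{<n},x_n)}^2}\,\d y\,\d x$, and then use tangent decoupling for the $\ell^2\p{\N,\mathcal{H}}$-valued differences $\p{e_n\otimes d_n f_n}_n$. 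Because the $\ell^2$-norm of partial sums of $\p{e_n\otimes d_nf_n}$ is monotone in the number of summands, the supremum over $k$ is automatic and no Hardy structure is needed at the decoupling stage. In that streamlined version the Hardy (positive-frequency) structure is used exactly once, in the square-function equivalences $\n{F}\simeq\E S(F)$ and $\n{G}\simeq\E S(G)$, and your closing remark attributing its necessity to the Stein-type inequality per se is a little misplaced; still, in the route you chose, the $H^1$-martingale maximal inequality does play that role, so your statement is defensible for your argument.
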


\begin{prop}\label{trivsumL1}If $S$ is a measure space and $\p{X_i}$ is a family of Banach spaces, then 
\beq L^1\p{S, \sum_i X_i} \subseteq \sum_i L^1\p{S,X_i}\eeq
(the other inclusion is trivial).
\end{prop}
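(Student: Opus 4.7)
The plan is to reduce to the case of simple functions and then use a telescoping approximation argument. The nontrivial inclusion asserts that for every $f \in L^1(S, \sum_i X_i)$ and every $\epsilon > 0$ there exist $f_i \in L^1(S, X_i)$ with $f = \sum_i f_i$ and $\sum_i \|f_i\|_{L^1(S, X_i)} \leq (1+\epsilon)\|f\|_{L^1(S, \sum_i X_i)}$.

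First I would handle a simple function $\phi = \sum_{k=1}^N x^{(k)} \mathbf{1}_{A_k}$, where $x^{(k)} \in \sum_i X_i$ and $A_1,\dots,A_N$ are pairwise disjoint measurable subsets of $S$. By the definition of the norm on $\sum_i X_i$ as an infimum, for each $k$ pick a decomposition $x^{(k)} = \sum_i x^{(k)}_i$ with $\sum_i \|x^{(k)}_i\|_{X_i} \leq (1+\epsilon)\|x^{(k)}\|_{\sum_i X_i}$, involving only countably many indices. Then $\phi_i := \sum_k x^{(k)}_i \mathbf{1}_{A_k}$ is a simple $X_i$-valued function, $\phi = \sum_i \phi_i$, and by disjointness of the $A_k$ one computes $\sum_i \|\phi_i\|_{L^1(X_i)} = \sum_k |A_k|\sum_i \|x^{(k)}_i\|_{X_i} \leq (1+\epsilon)\|\phi\|_{L^1(\sum_i X_i)}$.

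For general $f \in L^1(S, \sum_i X_i)$, Bochner integrability produces simple functions $s_n$ with $s_0 = 0$, $s_n \to f$ in norm, and $\sum_{n\geq 1} \|s_n - s_{n-1}\|_{L^1(\sum_i X_i)} \leq (1+\epsilon)\|f\|_{L^1(\sum_i X_i)}$ (take $s_1$ close to $f$ and make subsequent differences decay geometrically). Applying the simple-function case to each $s_n - s_{n-1}$ gives $s_n - s_{n-1} = \sum_i g^{(n)}_i$ with $\sum_i \|g^{(n)}_i\|_{L^1(X_i)} \leq (1+\epsilon)\|s_n - s_{n-1}\|_{L^1(\sum_i X_i)}$. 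Setting $f_i := \sum_n g^{(n)}_i$, the double sum $\sum_i \sum_n \|g^{(n)}_i\|_{L^1(X_i)}$ is bounded by $(1+\epsilon)^2 \|f\|_{L^1(\sum_i X_i)}$, so each series converges absolutely in $L^1(S, X_i)$ and $\sum_i f_i = \sum_n (s_n - s_{n-1}) = f$, giving the desired estimate.

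The only step that looks subtle, namely measurable selection of the pointwise decomposition $f(s) = \sum_i f_i(s)$, is sidestepped entirely by the simple-function reduction: only countably many choices are made (finitely many values per approximation level), so each $f_i$ inherits measurability from the approximants. Letting $\epsilon \to 0$ then yields the inclusion with constant arbitrarily close to $1$.
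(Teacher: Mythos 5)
Your argument is correct. The paper records Proposition~\ref{trivsumL1} as known and gives no proof, so there is nothing internal to compare with; the reduction to simple functions followed by a telescoping series with absolutely summable $L^1$ norms is the standard route, and it also shows that the inclusion has norm~$1$. The only step you pass over quickly is the final identity $\sum_i f_i=f$, and it merits a line: since $\sum_{n,i}\n{g^{(n)}_i}_{L^1\p{S,X_i}}<\infty$, Tonelli gives $\sum_{n,i}\n{g^{(n)}_i(s)}_{X_i}<\infty$ for a.e.\ $s$, so for such $s$ the double series $\sum_{n,i}g^{(n)}_i(s)$ converges absolutely in $\sum_i X_i$ and may be summed in either order. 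Summing over $i$ first gives $\sum_n\p{s_n(s)-s_{n-1}(s)}=\lim_N s_N(s)$, which equals $f(s)$ a.e.\ because the geometric $L^1$ decay of $s_n-f$ already forces $s_n\to f$ almost everywhere; summing over $n$ first gives $\sum_i f_i(s)$. Hence $\sum_i f_i=f$ a.e.\ and therefore in the ambient space of the sum, as required.
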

In Section 3, we prove the main result. In Section 4, we show how to use the main result to derive some interpolation properties of $U^1_m\cap\honeall$.

\section{Theorem of Kislyakov and Xu for $\honeall\p{\T^\N}$}
The Pontryagin dual to the countable product $\T^\N$ is the infinite direct sum $\Z^{\oplus\N}$, consisting of sequences of integers, only finitely many of which are nonzero. We will equip this group with the total order $\geqlast$, where the positive cone consists of sequences $n$ such that $n_{j_0}>0$, where $j_0$ is the biggest $j$ such that $n_j\neq 0$. The sign of $n\in \Z^{\oplus\N}$ will be taken with respect to this order. \par
The classical Hilbert transform $\mathfrak{H}$ is defined on $L^2\p{\T}$ by the formula 
\beq \widehat{\mathfrak{H} f}(n)=-i\sgn n \widehat{f}(n)\eeq
for $n\in \Z$. The same forumula defines $\mathfrak{H}$ on $L^2\p{\T^\N}$. These definitions are consistent in the sense that $\mathfrak{H}$ on $\T^\N$ restricted to functions dependent only on the first variable coincides with $\mathfrak{H}$ on $\T$. It is known that for any $1<p<\infty$, $\mathfrak{H}\otimes \id_X$ is bounded on $L^p\p{\T,X}$ if $X$ is a UMD space. We will make use of an extension of this fact to $\T^\N$. 
\begin{lem}If $X$ is a $UMD$ space and $1<p<\infty$, then $\mathfrak{H}\otimes \id_X$ is bounded on $L^p\p{\T^\N,X}$.\end{lem}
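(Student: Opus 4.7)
The plan is to reduce, by density of trigonometric polynomials, to bounding $\mathfrak{H} \otimes \id_X$ uniformly in $N$ on functions $f \in L^p\p{\T^N, X}$. Setting $\mathcal{F}_k = \sigma\p{t_1,\ldots,t_k}$ with conditional expectations $\E_k$, I would work with the Hardy martingale decomposition $f = \sum_{k=0}^N d_k$ where $d_k = \p{\E_k - \E_{k-1}} f$.

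The spectral key is that every character $\e^{2\pi \i \ilsk{n,t}}$ appearing in $d_k$ has $\max \supp n = k$, so $\sgn_{\mathrm{last}} n = \sgn n_k$. Thus $\mathfrak{H}$ acts on $d_k$ precisely as the classical Hilbert transform $\mathfrak{H}_k$ in the single variable $t_k$, yielding
\[
\mathfrak{H} f = \sum_{k=1}^N \mathfrak{H}_k d_k,
\]
with $\p{\mathfrak{H}_k d_k}_k$ again a martingale difference sequence for $\p{\mathcal{F}_k}$ (since $\mathfrak{H}_k d_k$ is $\mathcal{F}_k$-measurable and $\mathfrak{H}_k$ annihilates the constant mode in $t_k$). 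Each $\mathfrak{H}_k$, viewed as a Hilbert transform on $L^p\p{\T, L^p\p{\T^{N-1}, X}}$, has operator norm equal to $M_X := \n{\mathfrak{H}}_{L^p\p{\T,X} \to L^p\p{\T,X}}$, which is independent of $N$ because the UMD constant is preserved under $L^p$-valued lifting.

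\textbf{Main obstacle.} The final and hardest step is to pass from these pointwise bounds on $\mathfrak{H}_k$ to a uniform inequality $\n{\sum_k \mathfrak{H}_k d_k}_p \lesssim M_X \n{f}_p$. A naive induction on $N$ (peeling off the outermost coordinate via the splitting $f = \E_{N-1} f + \p{f - \E_{N-1} f}$, on which $\mathfrak{H}$ reduces to the inductive analogue on $\T^{N-1}$ and to $\mathfrak{H}_N$ respectively) produces the useless recursion $C_N \leq C_{N-1} + 2 M_X$. The essential content of the lemma is therefore an operator-valued martingale transform inequality, of Bourgain-Garling type, upgrading Burkholder's scalar $\pm 1$ unconditionality of martingale differences to transforms by the R-bounded family $\{\mathfrak{H}_k\}$. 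This Hardy martingale unconditionality for UMD-valued functions, together with the dimension-free R-boundedness of $\{\mathfrak{H}_k\}$ inherited from the singleton $\{\mathfrak{H}\}$ on $L^p\p{\T,X}$, closes the argument.
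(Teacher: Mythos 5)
Your approach is genuinely different from the paper's, and the structural observations at the start are correct: the Fourier support of $d_k=(\E_k-\E_{k-1})f$ has $\max\supp n=k$, so $\mathfrak{H}$ acts on $d_k$ as the one-variable transform $\mathfrak{H}_k$, and $(\mathfrak{H}_k d_k)_k$ is again a martingale difference sequence. You also correctly see that the naive recursion in $N$ is useless. But the resolution you offer for the ``main obstacle'' does not close the argument as written. The standard operator-valued martingale transform theorem (UMD plus $R$-bounded transforming sequence) requires the transforming operators $v_k$ to be \emph{predictable}, i.e.\ $\mathcal{F}_{k-1}$-measurable, and to act pointwise on the target space $X$. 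The operators $\mathfrak{H}_k$ satisfy neither condition: $\mathfrak{H}_k$ is a Fourier multiplier in the \emph{innovation} variable $t_k$, so it is not $\mathcal{F}_{k-1}$-measurable, and it is an operator on $L^p(\T_k,\,\cdot\,)$ rather than on $X$. The fact that $(\mathfrak{H}_k d_k)_k$ is a martingale difference sequence with $\n{\mathfrak{H}_k d_k}_p\lesssim\n{d_k}_p$ term by term does not, by itself, give any control of $\n{\sum_k\mathfrak{H}_k d_k}_p$ by $\n{\sum_k d_k}_p$ --- that implication is exactly the non-trivial content. What you would in fact need is a decoupling/tangent-martingale inequality or a Hardy-martingale projection bound in the spirit of Garling, which is essentially \emph{equivalent} to the lemma you are trying to prove (note $\mathfrak{H}=i\,\id-2i\sum_k\pi_k^+(\E_k-\E_{k-1})$, so boundedness of $\mathfrak{H}$ is the same as boundedness of the Hardy-martingale Riesz projection); invoking it as a black box is therefore circular unless you cite a precise independent source and verify its hypotheses, which the sketch does not do.

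The paper avoids all of this with a short transference argument. By density one may take $f$ a trigonometric polynomial with finite spectrum $A\subset\Z^{\oplus\N}$. Choosing integers $\alpha_j$ increasing fast enough (depending on $A$), the substitution $M_\alpha:\e^{2\pi i\ilsk{n,t}}\mapsto\e^{2\pi i\tau\sum_j n_j\alpha_j}$ is, by the transference principle, a two-sided $L^p(X)$-isomorphism on the span of the characters in $A$, and --- this is the point of taking $\alpha_j$ lacunary enough --- it sends $\geqlast$-positive characters to positive integers, hence intertwines $\mathfrak{H}\otimes\id_X$ on $\T^\N$ with the classical $\mathfrak{H}\otimes\id_X$ on $\T$. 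The lemma then follows from the classical UMD Hilbert transform bound on $L^p(\T,X)$, with no martingale machinery at all. You would strengthen your write-up considerably either by adopting this transference route, or by replacing the appeal to an unnamed ``Bourgain--Garling inequality'' with a precise statement and reference, together with a verification that the hypotheses are met (in particular, explaining how to circumvent the non-predictability of $\mathfrak{H}_k$).
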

\begin{proof}
Let us fix a polynomial $\varphi\in L^p\p{\T^\N,X}$ and let $A=\supp \widehat{\varphi}$. While the finite set $A$ is fixed, by the transference principle we can choose a sequence $\alpha_j$ increasing fast enough so that the operator
\beq L^p_A\p{\T^n,X}\ni f=\sum_{n\in A} \widehat{f}(n)\e^{2\pi i \p{\sum_j n_j t_j}}\mapsto M_\alpha f=\sum_{n\in A} \widehat{f}(n) \e^{2 \pi i \tau\sum_j n_j \alpha_j}\in L^p\p{\T,X}\eeq
is bounded and maps positive characters on $\T^\N$ (with respect to the order we chose on $\Z^{\oplus\infty}$) to positive characters on $\T$. Since both $f$ and $\p{\mathfrak{H}\otimes \id_X}f$ have Fourier transform supported on $A$, we have
\begin{eqnarray} \left\|\p{\mathfrak{H}\otimes \id_X} f\right\|_{L^p\p{\T^\N,X}}
&\simeq& \left\|M\p{\mathfrak{H}\otimes \id_X}f\right\|_{L^p\p{\T,X}}  \\
&=& \left\|\p{\mathfrak{H}\otimes \id_X}\p{Mf}\right\|_{L^p\p{\T,X}}\\
&\lesssim_{p,X}& \left\|Mf\right\|_{L^p\p{\T,X}}\\
&\simeq&  \left\|f\right\|_{L^p\p{\T^\N,X}}\end{eqnarray}\end{proof}
The original proof of K-X theorem \cite{KX} patched by this lemma is enough to obtain its $\T^\N$ version.
\begin{thm}[Kislyakov, Xu+$\varepsilon$]
Let $E_0$, $E_1$ be Banach sequence lattices. Assume that there exists $\alpha$ such that convexifications $E_0^{(\alpha)}$, $E_1^{(\alpha)}$ are UMD. Then for any $1\leq p_0,p_1<\infty$, the couple 
\beq \p{H^{p_0}_{\mathrm{last}}\p{\T^\N, E_0}, H^{p_1}_{\mathrm{last}}\p{\T^\N, E_1}}\eeq
is $K$-closed in 
\beq \p{L^{p_0}\p{\T^\N, E_0}, L^{p_1}\p{\T^\N, E_1}}.\eeq
\end{thm}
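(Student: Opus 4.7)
My plan is to retrace the original Kislyakov--Xu proof \cite{KX} verbatim, with two cosmetic substitutions throughout: $\T$ replaced by $\T^\N$, and the standard order on the dual $\Z$ of $\T$ replaced by $\geqlast$ on the dual $\Z^{\oplus\N}$ of $\T^\N$. Under this dictionary, the objects line up: $H^p_{\mathrm{last}}(\T^\N,E)$ is spanned by the $\geqlast$-positive characters, the analytic projection is $P=\frac12(\id+i\mathfrak{H})+\E_{\{0\}}$ with $\mathfrak{H}$ the $\geqlast$-Hilbert transform, and the lattice-side inputs (convexification, order continuity, duality) are insensitive to the underlying compact abelian group.

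The only group-sensitive ingredient in K--X is the $L^p$-boundedness of $\mathfrak{H}\otimes\id_X$ for UMD $X$ and $1<p<\infty$. Classically this is the vector-valued Hilbert transform theorem on $\T$; in our setting it is precisely the preceding lemma, whose proof proceeds by transference to reduce any statement with Fourier support in a fixed finite set $A\subset\Z^{\oplus\N}$ to the corresponding statement on $\T$. Inserting our lemma in place of the $\T$-lemma at every K--X invocation should yield the $\T^\N$-version without further modifications.

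Concretely, in the easy regime $p_0,p_1>1$ the argument is essentially immediate: after first convexifying the $E_i$ to the UMD level $E_i^{(\alpha)}$, the projection $P$ is bounded on $L^{p_i}(\T^\N,E_i)$ by our lemma, and applying $P$ to any $L^{p_0}+L^{p_1}$-decomposition of an element of $H^{p_0}_{\mathrm{last}}+H^{p_1}_{\mathrm{last}}$ produces a Hardy-space decomposition with controlled norms. The substantive case, which is the main content of the K--X paper, is $\min(p_0,p_1)=1$, where $\mathfrak{H}$ is unbounded on $L^1$; K--X handle this through a more delicate combination of convexification and Calder\'on--Zygmund-type truncation, each step ultimately reducing to a UMD-valued $L^p$-bound with some $p>1$ that is covered by our lemma.

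The main obstacle is the bookkeeping: one has to verify that K--X's proof never invokes a feature of $\Z$ beyond its total orderedness and the resulting UMD-valued $L^p$-boundedness of the associated Hilbert transform. A potential concern is that $\geqlast$ is non-Archimedean (its positive cone on $\Z^{\oplus\N}$ is not generated by a single element), which could invalidate any step that relies on a martingale filtration interpretation of $H^p_{\mathrm{last}}$. Since K--X's argument is entirely operator-theoretic/lattice-theoretic rather than filtration-theoretic, this concern should not materialize, but carrying out this verification is exactly the ``technical difference'' that Section 2 announces and is where I would focus the careful writing.
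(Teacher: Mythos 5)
Your approach is exactly the paper's: the paper dispatches this theorem in a single sentence, asserting that the original Kislyakov--Xu proof, patched by the preceding lemma on UMD-valued $L^p$-boundedness of the $\geqlast$-Hilbert transform on $\T^\N$, carries over verbatim. Your elaboration of why the substitution $\T\leadsto\T^\N$, standard order $\leadsto\geqlast$ is harmless, and your identification of the Hilbert-transform lemma as the sole group-sensitive input, is a faithful unpacking of what the paper leaves implicit.
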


Let $A,B$ be finite sets. If $f_i\in \honeall\p{\T^{A}\times \T^B,\mathcal{H}}$ for $i\in \N^A\times \N^B$, we define a norm on the sequence $\p{f_i}$ by
\beq \left\|\p{f_i}_{i\in \N^A\times \N^B}\right\|_{E_{A,B}\p{\mathcal{H}}}= \sum_{i_A\in \N^A} \int_{\T^A} \d \xi_A \sqrt{ \sum_{i_{B}\in \N^{B}}\int_{\T^{B}}\d\xi_{B}\left\|f_{i_A,i_{B}}\p{\xi_A,\xi_{B}}\right\|_\mathcal{H}^2}.\eeq
In other words,
\beq E_{A,B}\p{\mathcal{H}}= \ell^1\p{\N^A,\honeall\p{\T^A,\ell^2\p{\N^B,H^2_{\mathrm{all}}\p{\T^B,\mathcal{H}}}}}.\eeq

By copying the proof of Xu's result form \cite{xul1h1} with additional regard to the fact that the unconditional basis of choice in $H^1\p{\T}$ can also be chosen to be orthonormal \cite{wojtaszczyk}, we get the following.

\begin{lem}[Xu+$\varepsilon$]Let $\p{\chi_k}_{k\in\Z}$ be an unconditional orthonormal basis of $H^1\p{\T}$ and denote by $\chi_{k_1,\ldots,k_m}= \otimes_{j=1}^m \chi_{k_j}$ the elements of the corresponding unconditional orthonormal basis of $\honeall\p{\T^m}$. Then, for $A\dot\cup B=[1,m]$, the map
\beq \p{f_i}_{i\in \N^m}\mapsto \p{ \left\langle f_i , \chi_k \otimes e_s \right\rangle }_{i\in \N^m, k\in \Z^m,s\in S},\eeq
where $\p{e_s}_{s\in S}$ is an orthonormal basis of $\mathcal{H}$, is an isomorphism between $E_{A,B}\p{\mathcal{H}}$ and a Banach lattice, the 2-convexification of which is UMD. 
\end{lem}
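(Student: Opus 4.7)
The plan is to adapt Xu's argument from \cite{xul1h1} (which handles one torus variable) to the polytorus by induction on $m$. The new ingredient is Wojtaszczyk's orthonormal refinement of the unconditional basis of $H^1\p{\T}$: orthonormality in $L^2\p{\T}$ ensures that basis expansions in different variables tensor rigidly and commute cleanly with the mixed norm layers appearing in $E_{A,B}\p{\mathcal{H}}$.

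First, I would verify that $\chi_{k_1}\otimes\cdots\otimes\chi_{k_m}$ forms an unconditional orthonormal basis of $\honeall\p{\T^m}$. Orthonormality is immediate from the $L^2\p{\T^m}$ inner product, while unconditionality would be established by induction on $m$: the univariate case is Wojtaszczyk--Xu, and the step follows by applying the Hilbert-valued version of Xu's theorem in the new variable to the sequence lattice produced by the $\p{m-1}$-variable inductive hypothesis (whose $2$-convexification is UMD, the other half of the induction).

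Next, I would identify the image of $E_{A,B}\p{\mathcal{H}}$ under the coefficient map with an explicit Banach lattice $F$ on $\N^m\times\Z^m\times S$ by unrolling the mixed norm layer by layer: the Hilbert factor $\mathcal{H}$ becomes $\ell^2\p{S}$ via $\p{e_s}$; the $H^2_{\mathrm{all}}\p{\T^B,\mathcal{H}}$ layer becomes, by Plancherel through the orthonormal basis, a closed sublattice of $\ell^2\p{\Z^B\times S}$; the $\ell^2\p{\N^B,\cdot}$ passes through as $\ell^2$; the $\honeall\p{\T^A,\cdot}$ layer with Hilbert-valued target is replaced, by $|A|$ iterations of Xu's theorem with Hilbert coefficients, by an $|A|$-fold tensor $\Xi_A$ of the univariate $H^1$-coefficient lattice $\Xi$; and the outer $\ell^1\p{\N^A,\cdot}$ passes through unchanged.

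For the $2$-convexification, the operation distributes over each layer, sending $\ell^1\to\ell^2$, $\ell^2\to\ell^4$, and $\Xi\to\Xi^{(2)}$, which is UMD by Xu. The iterated tensor $\Xi_A^{(2)}$ is then a mixed norm of UMD lattices with all exterior indices in $\p{1,\infty}$, hence UMD, and $F^{(2)}$ is another such mixed norm with external indices only $2$ and $4$, so $F^{(2)}$ is UMD. The main obstacle is the simultaneous induction on unconditionality of the tensor basis and UMD of the $2$-convexification of the associated lattice, since each property is needed to continue the other at the next variable. Wojtaszczyk's orthonormality is what makes this joint induction close: by acting isometrically at the $L^2$ level, the coefficient map in each variable commutes with the mixed norm structure of the other variables, so the inductive step can be carried out variable by variable without losing control of norms.
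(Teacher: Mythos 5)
Your proposal takes essentially the same route the paper intends, since the paper's proof is a one-line pointer to carrying out Xu's argument from \cite{xul1h1} with the extra observation that Wojtaszczyk's unconditional basis of $H^1\p{\T}$ can be chosen orthonormal. That orthonormality is precisely the point you isolate: it lets the same basis act as a lattice isomorphism on the $H^1$ layers over $A$ and as a Plancherel isometry on the $H^2$ layers over $B$, after which the identification of the coefficient space and the UMD property of its $2$-convexification are Xu's lattice arithmetic, iterated over the variables as you describe.
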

As a straighforward consequence, we get 

\begin{cor}\label{kcleab} The family 
\beq \p{\honelast \p{\T^\N, E_{A,B}\p{\mathcal{H}}}}_{A\dot\cup B=[1,m]}\eeq
is $K$-closed in 
\beq \p{L^1\p{\T^\N, E_{A,B}\p{\mathcal{H}}}}_{A\dot\cup B=[1,m]}.\eeq
\end{cor}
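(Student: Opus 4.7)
My plan is to identify each $E_{A,B}\p{\mathcal{H}}$ with a Banach sequence lattice on a common index set via the preceding lemma, apply the Kislyakov--Xu theorem pairwise, and iterate to obtain $K$-closedness for the full finite family indexed by partitions of $[1,m]$.

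First, the preceding lemma provides an isomorphism $E_{A,B}\p{\mathcal{H}} \cong L_{A,B}$, where $L_{A,B}$ is a Banach sequence lattice on $\N^m \times \Z^m \times S$ with UMD $2$-convexification, implemented by coefficient expansion against the orthonormal basis $\p{\chi_k \otimes e_s}$. Since each $\chi_k$ lies in $H^1\p{\T}$, this isomorphism intertwines the $\honelast$-structure on the factor $\T^\N$, so it suffices to prove $K$-closedness of $\p{\honelast\p{\T^\N, L_{A,B}}}_{A \dot\cup B = [1,m]}$ in $\p{L^1\p{\T^\N, L_{A,B}}}_{A \dot\cup B = [1,m]}$.

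Second, the Kislyakov--Xu theorem applied with $p_0 = p_1 = 1$ and $\alpha = 2$ to any pair of these lattices supplies pairwise $K$-closedness. To upgrade this to $K$-closedness of the finite family of $2^m$ lattices, I enumerate the partitions as $P_1, \ldots, P_N$ and iterate: by Proposition~\ref{trivsumL1}, $\sum_j L^1\p{\T^\N, L_{P_j}} = L^1\p{\T^\N, \sum_j L_{P_j}}$, so starting from an $L^1$-decomposition $f = \sum_j g_j$, I will apply pair $K$-closedness to the couple $\p{L_{P_1}, \sum_{j\geq 2} L_{P_j}}$ to extract a Hardy piece $h_1 \in \honelast\p{\T^\N, L_{P_1}}$, and recurse on the remainder.

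The main technical point will be verifying that each partial sum $\sum_{j \geq k} L_{P_j}$ retains the UMD $2$-convexification property needed at each iteration. This should follow because finite sums of Banach sequence lattices sharing a common unconditional basis admit a joint $2$-convexification which embeds into a sum of the individual $2$-convexifications, and finite sums of UMD spaces are UMD, being quotients of their direct sums. Once this is in hand, a straightforward bookkeeping of the constants accumulated over the $2^m - 1$ iterations yields the asserted $K$-closedness with a constant depending on $m$.
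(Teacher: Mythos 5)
The paper offers no explicit proof of Corollary~\ref{kcleab}, calling it a ``straightforward consequence'' of the Xu+$\varepsilon$ lemma and the Kislyakov--Xu theorem stated for couples, so your task is to fill in exactly the gap you identify: passing from pairwise $K$-closedness to $K$-closedness of the $2^m$-member family. Your plan --- transport everything to Banach sequence lattices $L_{A,B}$ via the basis isomorphism, apply Kislyakov--Xu for pairs $(L_{P_1}, \sum_{j\geq 2} L_{P_j})$, and iterate using Proposition~\ref{trivsumL1} --- is the right one and matches the intended derivation.

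There is, however, a genuine slip in the technical verification. You write that the $2$-convexification of $\sum_j L_{P_j}$ \emph{embeds into} $\sum_j L_{P_j}^{(2)}$, that the latter is UMD as a quotient of $\bigoplus_j L_{P_j}^{(2)}$, and then implicitly conclude that $\left(\sum_j L_{P_j}\right)^{(2)}$ is UMD. But UMD does \emph{not} pass to subspaces, so an embedding into a UMD space proves nothing. What saves the argument is that the embedding is in fact an isomorphism, i.e.\ $\left(\sum_j L_{P_j}\right)^{(2)} \simeq \sum_j L_{P_j}^{(2)}$ with equivalent norms. The nontrivial inclusion is the reverse one: given $|x|^2 = \sum_j y_j$ with $y_j \geq 0$, set $x_j = x\, y_j/|x|^2$ (and $x_j = 0$ where $x = 0$); then $\sum_j x_j = x$ and $|x_j|^2 = y_j^2/|x|^2 \leq y_j$ since $y_j \leq |x|^2$, whence $\|x_j\|_{L_{P_j}^{(2)}} \leq \|y_j\|_{L_{P_j}}^{1/2}$ and the claimed inclusion follows by concavity of the square root. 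With the equivalence in hand, $\left(\sum_j L_{P_j}\right)^{(2)}$ is a quotient of the UMD space $\bigoplus_j L_{P_j}^{(2)}$, hence UMD, and the hypotheses of the Kislyakov--Xu theorem (with $\alpha = 2$ throughout) are met at every stage of the iteration. You should also note, as the paper does elsewhere in Section~3, that the iteration is first carried out on a dense class (trigonometric polynomials) where membership in $\sum_j H^1_{\mathrm{last}}\p{\T^\N, L_{P_j}}$ is immediate, and then extended by density; otherwise the inductive step, which requires the remainder after extracting $h_1$ to lie in $\sum_{j\geq 2} H^1_{\mathrm{last}}\p{\T^\N, L_{P_j}}$ rather than merely in $H^1_{\mathrm{last}}\p{\T^\N, \sum_{j\geq 2} L_{P_j}}$, is circular.
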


\begin{cor}\label{kclh1last}The couple 
\beq \p{\honelast\p{\T^\N,\ell^1\p{H^1\p{\T,\ell^2}}}, \honelast\p{\T^\N,\ell^2\p{H^2\p{\T,\ell^2}}}}\eeq
is $K$-closed in 
\beq \p{L^1\p{\T^\N,\ell^1\p{H^1\p{\T,\ell^2}}}, L^1\p{\T^\N,\ell^2\p{H^2\p{\T,\ell^2}}}}.\eeq
\end{cor}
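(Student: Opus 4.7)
The plan is to derive Corollary \ref{kclh1last} as the $m=1$, $\mathcal{H}=\ell^2$ specialization of Corollary \ref{kcleab}. When $m=1$, the set $[1,m]$ has exactly one element, so disjoint partitions $A\dot\cup B=\{1\}$ fall into two cases: $(A,B)=(\{1\},\emptyset)$ and $(A,B)=(\emptyset,\{1\})$. Consequently the family in Corollary \ref{kcleab} is indexed by a two-element set, i.e.\ it is a couple, and the only thing to check is that the two resulting lattices match the ones in the present statement.

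The first computation is to unpack $E_{A,B}(\ell^2)$ at each of the two partitions, interpreting $\T^\emptyset$ as a singleton group and $\N^\emptyset$ as a singleton index set, so that $H^p_{\mathrm{all}}\p{\T^\emptyset,X}=X$ and $\ell^p\p{\N^\emptyset,X}=X$. One finds
\begin{align*}
E_{\{1\},\emptyset}\p{\ell^2}&=\ell^1\p{\N,\honeall\p{\T,\ell^2}}=\ell^1\p{H^1\p{\T,\ell^2}},\\
E_{\emptyset,\{1\}}\p{\ell^2}&=\ell^2\p{\N,H^2_{\mathrm{all}}\p{\T,\ell^2}}=\ell^2\p{H^2\p{\T,\ell^2}},
\end{align*}
which are precisely the inner lattices occurring in the statement of the corollary. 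Since the outer wrappers $\honelast\p{\T^\N,\cdot}$ and $L^1\p{\T^\N,\cdot}$ already coincide in the two corollaries, substituting into Corollary \ref{kcleab} yields the conclusion immediately.

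The one item that deserves a sanity check is that the Xu+$\varepsilon$ lemma still produces a UMD 2-convexification when one of $A$, $B$ is empty. In that regime the tensor products of basis vectors $\chi_{k_1,\ldots,k_m}$ collapse to a single factor, so the isomorphism of the lemma reduces to the univariate case from \cite{xul1h1}, and both $E_{\{1\},\emptyset}\p{\ell^2}$ and $E_{\emptyset,\{1\}}\p{\ell^2}$ satisfy the hypothesis of the Kislyakov--Xu theorem recalled in Section 2. Hence the derivation of this corollary is essentially bookkeeping; the only genuine obstacle would have been a pathology at the empty partitions, and that is ruled out by the preceding lemma.
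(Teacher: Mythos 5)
Your derivation is correct, and it matches what the paper intends: Corollary~\ref{kclh1last} is indeed the specialization of Corollary~\ref{kcleab} to $m=1$, $\mathcal{H}=\ell^2$, with the two one-element partitions $(\{1\},\emptyset)$ and $(\emptyset,\{1\})$ producing the inner lattices $\ell^1\p{H^1\p{\T,\ell^2}}$ and $\ell^2\p{H^2\p{\T,\ell^2}}$ after the conventions $H^p_{\mathrm{all}}\p{\T^\emptyset,X}=X$ and $\ell^p\p{\N^\emptyset,X}=X$ are applied. The sanity check about the Xu$+\varepsilon$ lemma at the degenerate partitions is a sensible thing to flag and is resolved exactly as you say.
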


\begin{cor}\label{onemorecor}
The couple 
\beq \p{ \honeall\p{\T^m,H^1\p{\T,\ell^2}}, \honeall\p{\T^m,H^2\p{\T,\ell^2}} }\eeq
is $K$-closed in 
\beq \p{ L^1\p{\T^m,H^1\p{\T,\ell^2}}, L^1\p{\T^m,H^2\p{\T,\ell^2}} }\eeq
\end{cor}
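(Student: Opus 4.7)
I plan to derive Corollary \ref{onemorecor} by iterating the Kislyakov-Xu theorem of Section~2 variable-by-variable along $\T^m$. For a subset $S\subseteq[1,m]$ and Banach space $Y$, let $H^1_S\p{\T^m,Y}$ denote the subspace of $L^1\p{\T^m,Y}$ consisting of functions $f$ whose Fourier coefficients satisfy $\widehat{f}(n)=0$ whenever $n_j<0$ for some $j\in S$, so that $H^1_\emptyset=L^1$ and $H^1_{[1,m]}=\honeall$. By transitivity of $K$-closedness, the corollary reduces to showing, for every $S\subsetneq[1,m]$ and every $j\notin S$, that the couple $\p{H^1_{S\cup\{j\}}\p{\T^m,H^p\p{\T,\ell^2}}}_{p=1,2}$ is $K$-closed in $\p{H^1_S\p{\T^m,H^p\p{\T,\ell^2}}}_{p=1,2}$, and then chaining across the $m$ successive enlargements $\emptyset\subsetneq\{1\}\subsetneq\cdots\subsetneq[1,m]$.

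For each one-step upgrade, I would use the Fubini identifications $H^1_S\p{\T^m,Y}=L^1\p{\T_j,H^1_S\p{\T^{[1,m]\setminus\{j\}},Y}}$ and $H^1_{S\cup\{j\}}\p{\T^m,Y}=H^1\p{\T_j,H^1_S\p{\T^{[1,m]\setminus\{j\}},Y}}$, valid because $j\notin S$. The question then reduces to applying the Kislyakov-Xu theorem of Section~2 to the single variable $\T_j$ (realized inside $\T^\N$ as the subspace of functions depending only on $t_j$, for which $\honelast=H^1$), with target Banach spaces $E_p=H^1_S\p{\T^{[1,m]\setminus\{j\}},H^p\p{\T,\ell^2}}$ for $p\in\{1,2\}$.

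The main technical obstacle is verifying at each step that each $E_p$ is a Banach sequence lattice whose $2$-convexification is UMD. I would rewrite $E_p\simeq L^1\p{\T^{[1,m]\setminus\{j\}\setminus S},\honeall\p{\T^S,H^p\p{\T,\ell^2}}}$ and argue analogously to the Xu$+\varepsilon$ lemma of the paper: Wojtaszczyk's orthonormal unconditional basis of $H^1\p{\T}$ tensored along each analytic $\T$-factor with the standard basis of $\ell^2$ identifies $\honeall\p{\T^S,H^p\p{\T,\ell^2}}$ with a Banach sequence lattice whose $2$-convexification replaces each $H^1$ factor by $H^2$ via Parseval and is therefore a closed subspace of an $L^2$ space, hence UMD. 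The outer $L^1\p{\T^k,\cdot}$ preserves this property because the $2$-convexification of $L^1\p{\T^k,X}$ is $L^2\p{\T^k,X^{(2)}}$, still UMD as soon as $X^{(2)}$ is.
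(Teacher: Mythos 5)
Your plan---chaining the single-variable Kislyakov--Xu theorem across the $m$ coordinates of $\T^m$ via the Fubini identifications and transitivity of $K$-closedness---is correct and is the natural route to the corollary; since the paper states it without proof, this iteration is almost certainly the intended argument. The reduction at each step to one analytic variable with target lattices $E_p = H^1_S\p{\T^{[1,m]\setminus\{j\}},H^p\p{\T,\ell^2}}$ is set up properly, and the observation that a single variable realized inside $\T^\N$ has $\honelast=\honeall=H^1$ is exactly the right one.

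The one place that needs tightening is the verification that $E_p$ is a Banach lattice with UMD $2$-convexification. You assert that the $2$-convexification ``replaces each $H^1$ factor by $H^2$ via Parseval and is therefore a closed subspace of an $L^2$ space.'' That is not how $2$-convexification acts: if $X$ is the lattice coming from the Wojtaszczyk basis $\p{\chi_k}$ with its square-function norm $\n{\p{c_k}}_X \simeq \int_\T \sqrt{\sum_k |c_k|^2 |\chi_k(t)|^2}\,\d t$, then $\n{\p{c_k}}_{X^{(2)}} = \n{\p{|c_k|^2}_k}_X^{1/2}$, which is a weighted $L^2\p{\T,\ell^4}$-type lattice, not $H^2\simeq\ell^2$, and in particular is not a subspace of a Hilbert space. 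The conclusion you need --- that $X^{(2)}$ is UMD --- is still true, because $\ell^4$ is UMD and $L^2$ of a UMD lattice is UMD; but the mechanism is the convexification calculus, not Parseval, and this is precisely the content of Xu's lattice computation which the paper imports as the ``Xu$+\varepsilon$'' lemma. Replace the Parseval sentence with that actual computation (or cite the lemma, whose argument adapts verbatim to the mixed $L^1\p{\T^k,\honeall\p{\T^S,H^p\p{\T,\ell^2}}}$ lattices you build), and the rest of your chain goes through as written.
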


\section{Decomposition theorems}
Let $f\in U^1_m\p{\T^\N}$ have a representation
\beq f(x)=\sum_{i_1<\ldots<i_m}f_{i_1,\ldots,i_m}\p{x_{i_1},\ldots,x_{i_m}}.\eeq
By Bourgain's square function theorem \cite{bourgwalsh} for Hoeffding subspaces of $L^1$ and Zinn's decoupling ineqaulity \cite{zinn},
\begin{eqnarray} \|f\|_{L^1}&\simeq& \int_{\T^\N}\sqrt{\sum_{i_1<\ldots<i_m}\left|f_{i_1,\ldots,i_m}\p{x_{i_1},\ldots,x_{i_m}}\right|^2}\d x\\
\label{eq:aftersqdecoup}&\simeq& \int_{\p{\T^\N}^m}\sqrt{\sum_{i_1<\ldots<i_m}\left|f_{i_1,\ldots,i_m}\p{x^{(1)}_{i_1},\ldots,x^{(m)}_{i_m}}\right|^2}\d x^{(1)}\ldots \d x^{(m)}
\end{eqnarray}
and therefore we will work with expressions of the form \eqref{eq:aftersqdecoup}.

\begin{thm}Let $f_{i,j}\in\honeall\p{\T^2}$. Then
\begin{align}\int_{\T^\N}\int_{\T^\N}\sqrt{\sum_{i,j}\left|f_{i,j}\p{x_i,y_j}\right|^2}\d x\d y& \\
\simeq\inf_{\substack{f_{i,j}=a_{i,j}+b_{i,j}+c_{i,j}+d_{i,j}\\ a_{i,j},b_{i,j},c_{i,j},d_{i,j}\in\honeall\p{\T^2}}} &  \sum_{i,j}\int_\T\int_\T\left|a_{i,j}\p{\xi,\upsilon}\right|\d\xi\d\upsilon\\
+& \sqrt{\sum_{i,j}\int_\T\int_\T\left|b_{i,j}\p{\xi,\upsilon}\right|^2\d\xi\d\upsilon}\\
+&\sum_i\int_\T\sqrt{\sum_j\int_\T \left|c_{i,j}\p{\xi,\upsilon}\right|^2\d \upsilon}\d\xi\\
+&\sum_j\int_\T\sqrt{\sum_i\int_\T \left|d_{i,j}\p{\xi,\upsilon}\right|^2\d \xi}\d\upsilon.\end{align}
\end{thm}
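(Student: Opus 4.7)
The plan is to iterate the $m=1$ machinery — Johnson-Schechtman plus Pisier-Bourgain $K$-closedness, as recalled in the introduction — once in each of the two variables, and then upgrade the resulting $L^1$-level decomposition to an analytic one by appealing to Corollary \ref{kcleab} from Section 2.

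The setup is standard: by Bourgain's square function theorem for Hoeffding subspaces (applied in each variable), the LHS is comparable to $\|f\|_{L^1}$ where $f(x,y)=\sum_{i,j}f_{i,j}(x_i,y_j)\in U^1_2(\T^\N)\cap\honeall(\T^\N)$. The direction $\lesssim$ of the claimed equivalence is the easy one: for any admissible decomposition $f_{i,j}=a_{i,j}+b_{i,j}+c_{i,j}+d_{i,j}$, the triangle inequality in $\ell^2$ bounds $\sqrt{\sum_{i,j}|f_{i,j}(x_i,y_j)|^2}$ by the sum of four square functions, and each of these is then bounded by the corresponding norm on the right by Cauchy-Schwarz (for the $\ell^2$-type norms appearing in $b,c,d$) or the trivial estimate $\sqrt{\sum\cdot}\leq\sum\cdot$ (for the $\ell^1$-type norm of $a$).

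The nontrivial direction $\gtrsim$ requires constructing a decomposition. The strategy is to apply the vector-valued Johnson-Schechtman inequality in the $y$-variable with values in a suitable UMD target (for instance $L^2(\T^\N_x,\ell^2_i)$), producing a splitting $f_{i,j}=G_{i,j}+H_{i,j}$ where $G$ carries the $\ell^1_j$-direction and $H$ the $\ell^2_j$-direction in $y$. Then apply Johnson-Schechtman a second time in the $x$-variable (in scalar or vector form, as the outer norm already in place dictates) to split $G=a+d$ and $H=c+b$. Zinn's decoupling is used throughout to reduce successive sums to genuinely independent families, so that JS in each variable applies to mean-zero independent summands. The four resulting pieces have, at the $L^1$ level, norms bounded by the original LHS and matching the four summands on the RHS.

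To obtain the pieces in $\honeall(\T^2)$ rather than merely in $L^1(\T^2)$, one invokes Corollary \ref{kcleab} with $m=2$: the four subsets $A\dot\cup B=[1,2]$ correspond precisely to the four norms $a,b,c,d$ on the RHS, and the $K$-closedness gives the analytic decomposition with comparable constants. The main obstacle I anticipate is the step producing the $L^1$ decomposition: the two successive applications of vector-valued Johnson-Schechtman must be carried out in UMD lattices whose norms compose correctly to produce the precise four $E_{A,B}$-norms on the RHS, and one must verify the decoupling, independence and mean-zero hypotheses after each intermediate step before the second JS can be applied.
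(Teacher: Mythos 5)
Your broad strategy — iterate Johnson--Schechtman twice, once per variable, and close with a $K$-closedness result — is the right one, and the direction $\lesssim$ is handled correctly. But there is a concrete gap in the order of operations that would break the $\gtrsim$ direction if carried out as written.

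After applying $\ell^2$-valued Johnson--Schechtman pointwise in $x$ to $F_j(x,\cdot)=(f_{i,j}(x_i,\cdot))_i$, the resulting summands $G_{i,j}(x,\upsilon)$, $H_{i,j}(x,\upsilon)$ are functions of \emph{all} of $x\in\T^\N$, not of $x_i$ alone. To apply a second Johnson--Schechtman in the $x$-variable one needs the $i$-th summand to genuinely depend only on $x_i$, and the only way the paper has to enforce this is the diagonal-projection operator of Proposition~\ref{diagproj}. Crucially, that proposition asserts boundedness of $(f_n)_n\mapsto(\E_{\{n\}}\otimes\id\,f_n)_n$ on $\honelast(\T^\N,\ell^2(\N,\mathcal{H}))$ — there is no $L^1(\T^\N,\ell^2)$ version available, since Stein-type diagonal estimates fail at $L^1$. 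This forces the $K$-closedness (Corollary~\ref{kclh1last}) to be applied \emph{before} the diagonal projection, so that the $g_{i,j},h_{i,j}$ are already in $\honelast$ in $x$ when they are projected down to functions of $(x_i,\upsilon)$. Your plan of running both JS iterations purely at the $L^1$ level and then upgrading to analyticity at the very end cannot get started, because there is no $L^1$-level mechanism to pass from $(x,\upsilon)$-dependence to $(x_i,\upsilon)$-dependence.

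A secondary misstep is the final appeal to Corollary~\ref{kcleab} to produce the analytic decomposition on $\T^2$. That corollary gives $K$-closedness of $(\honelast(\T^\N,E_{A,B}(\mathcal{H})))_{A\dot\cup B=[1,m]}$ inside $(L^1(\T^\N,E_{A,B}(\mathcal{H})))_{A\dot\cup B=[1,m]}$ — it concerns the $\T^\N$ variable, with the inner analyticity in $E_{A,B}$ already built in on both sides. It is not a $K$-closedness statement for the $\honeall(\T^2)$-subspaces of the four lattice-valued $L^1(\T^2)$ spaces, which is what your last step would require. The paper instead uses Corollary~\ref{kclh1last} twice, interleaved with the two JS steps and the diagonal projection, precisely because the analyticity must be restored one variable at a time. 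Finally, Zinn decoupling has already been used (before the theorem is stated) to pass to the bi-decoupled expression in $x$ and $y$; it plays no further role inside this proof, so your invocations of it after each JS step are not needed but also not harmful.
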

\begin{proof}
The ``$\leq$'' inequality is true with constant 1 and proved as in \cite{diss}. Define a sequence of functions $F_j:\T^\N\times \T\to\ell^2$ by 
\beq F_j\p{x,\upsilon}=\p{f_{i,j}\p{x_i,\upsilon}}_{i\in\N}.\eeq
Suppose that 
\beq 1= \int_{\T^\N}\int_{\T^\N}\sqrt{\sum_{i,j}\left|f_{i,j}\p{x_i,y_j}\right|^2}\d x\d y= \int_{\T^\N}\int_{\T^\N}\sqrt{\sum_{j}\left\|F_{j}\p{x,y_j}\right\|_{\ell^2}^2}\d x\d y.\eeq
For fixed $x$, we apply an $\ell^2$-valued Johnson-Schechtman inequality to functions $F_j\p{x,\cdot}$ and obtain a decomposition
\beq F_j=G_j+H_j,\quad G_j\p{x,\upsilon}= \p{g_{i,j}\p{x,\upsilon}}_{i\in \N},\quad H_j\p{x,\upsilon}= \p{h_{i,j}\p{x,\upsilon}}_{i\in \N}\eeq
(note that we lose analyticity and the information that $i$-th coordinates depend only on $x_i$) such that
\beq \label{eq:wecould}\int_{\T^\N}\sqrt{\sum_{j}\left\|F_{j}\p{x,y_j}\right\|_{\ell^2}^2}\d y\gtrsim \sum_j \int_\T \left\|G_j\p{x,\upsilon}\right\|_{\ell^2}\d\upsilon + \sqrt{\sum_j \int_\T \left\|H_j\p{x,\upsilon}\right\|_{\ell^2}^2\d\upsilon }\eeq
for all $x$. Since $F_j$'s were analytic in the second variable, by Pisier-Bourgain theorem, we can improve the decomposition to additionally satisfy 
\beq G_j\p{x,\cdot}=\p{g_{i,j}\p{x,\cdot}}_{i\in \N}\in H^1\p{\T,\ell^2},\quad H_j\p{x,\cdot}=\p{h_{i,j}\p{x,\cdot}}_{i\in\N}\in H^2\p{\T,\ell^2}.\eeq
Integrating \eqref{eq:wecould} with respect to $x$, we get
\beq \label{eq:havehad} 1\gtrsim \int_{\T^\N}\d x\sum_j\int_\T \d\upsilon\sqrt{\sum_i\left|g_{i,j}\p{x,\upsilon}\right|^2}+  \int_{\T^\N}\d x\sqrt{\sum_j\int_\T \d\upsilon\sum_i\left|h_{i,j}\p{x,\upsilon}\right|^2},\eeq
where 
\beq \p{g_{i,j}}_{i,j\in\N}\in L^1\p{\T^\N,\ell^1\p{H^1\p{\T,\ell^2}}}, \quad \p{h_{i,j}}_{i,j\in\N}\in L^1\p{\T^\N,\ell^2\p{H^2\p{\T,\ell^2}}}\eeq
and
\beq \label{eq:itall} g_{i,j}\p{x,\upsilon}+h_{i,j}\p{x,\upsilon}=f_{i,j}\p{x_i,\upsilon}.\eeq
Here, the inner $\ell^2$ is indexed by $i$ and the outer $\ell^1,\ell^2$ are indexed by $j$. A careful reader might notice that we are cheating a bit: there is no guarantee that $g_{i,j},h_{i,j}$ are measurable on the product space $\T^\N\times \T$, as they were constructed for each $x\in\T^\N$ separately. This can be alleviated as follows. Let $\mathcal{D}_N$ be the product of the $N$-th dyadic sigma algebra on first $N$ coordinates of $\T^\N$ and the trivial one on the rest. The inequality 
\begin{align} &\int_{\T^\N}\int_{\T^\N}\sqrt{\sum_{i,j}\left|f_{i,j}\p{x,y_j}\right|^2}\d x\d y\gtrsim \\
&\inf_{g_{i,j}\p{x,\upsilon}+h_{i,j}\p{x,\upsilon}= f_{i,j}\p{x_i,\upsilon}} \left\|\p{g_{i,j}}_{i,j}\right\|_{L^1\p{\T^\N,\ell^1\p{H^1\p{\T,\ell^2}}}}+ \left\|\p{h_{i,j}}_{i,j}\right\|_{L^1\p{\T^\N,\ell^2\p{H^2\p{\T,\ell^2}}}}\end{align}
is, by the above reasoning, true for $f_{i,j}$'s on $\T^\N\times \T$ that are $\mathcal{D}_N$-measurable in the first variable and analytic in the second, for some $N$. Such functions are dense in all three norms appearing in the inequality, so it holds true whenever the left hand side is finite and $f_{i,j}$ are in $H^1\p{\T}$ with respect to the second variable.\par
We are in position to use Corollary \ref{kclh1last}, which, by the fact that $\T^\N\ni x\mapsto f_{i,j}\p{x_i,\upsilon}$ is in $\honelast\p{\T^\N}$ for each $i,j,\upsilon$, allows us to enforce
\beq \p{g_{i,j}}_{i,j\in\N}\in \honelast\p{\T^\N,\ell^1\p{H^1\p{\T,\ell^2}}}, \quad \p{h_{i,j}}_{i,j\in\N}\in \honelast\p{\T^\N,\ell^2\p{H^2\p{\T,\ell^2}}}\eeq
while maintaining \eqref{eq:havehad} and \eqref{eq:itall}. 
Now, since $f_{i,j}\p{x_i,\upsilon}$ depends on $x$ only through $x_i$, \eqref{eq:itall} gives
\beq \p{\E_{\{i\}}\otimes \id} g_{i,j}+ \p{\E_{\{i\}}\otimes \id} h_{i,j} =f_{i,j}.\eeq
Thus, the functions $\tilde{g}_{i,j}$, $\tilde{h}_{i,j}$ defined on $\T^2$ by
\beq \p{\E_{\{i\}}\otimes \id} g_{i,j}\p{x,\upsilon}= \tilde{g}_{i,j}\p{x_i,\upsilon}, \quad \p{\E_{\{i\}}\otimes \id} h_{i,j}\p{x,\upsilon}= \tilde{h}_{i,j}\p{x_i,\upsilon}\eeq
satisfy
\beq \tilde{g}_{i,j}+\tilde{h}_{i,j}=f_{i,j}.\eeq
By Lemma \ref{diagproj} applied at each $j,\upsilon$,
\begin{align}
&\int_{\T^\N}\d x\sum_j\int_\T \d\upsilon\sqrt{\sum_i\left| \tilde{g}_{i,j}\p{x_i,\upsilon}\right|^2}\\
=&\int_{\T^\N}\d x\sum_j\int_\T \d\upsilon\sqrt{\sum_i\left| \p{\E_{\{i\}}\otimes \id}g_{i,j}\p{x,\upsilon}\right|^2}\\
\lesssim & \int_{\T^\N}\d x\sum_j\int_\T \d\upsilon\sqrt{\sum_i\left|g_{i,j}\p{x,\upsilon}\right|^2}.
\end{align}
Similarly, by $L^2$-valued Lemma \ref{diagproj},
\begin{align}
&\int_{\T^\N}\d x\sqrt{\sum_j\int_\T \d\upsilon\sum_i\left|\tilde{h}_{i,j}\p{x_i,\upsilon}\right|^2}\\
=&\int_{\T^\N}\d x\sqrt{\sum_j\int_\T \d\upsilon\sum_i\left|\p{\E_{\{i\}}\otimes \id} h_{i,j}\p{x,\upsilon}\right|^2}\\
= & \int_{\T^\N}\d x\sqrt{ \sum_i\left\|\p{\E_{\{i\}}\otimes \id}\coprod_j h_{i,j}\p{x,\cdot}\right\|_{L^2\p{\coprod_j \T}}^2}\\
\lesssim & \int_{\T^\N}\d x\sqrt{ \sum_i\left\|\coprod_j h_{i,j}\p{x,\cdot}\right\|_{L^2\p{\coprod_j \T}}^2}\\
=& \int_{\T^\N}\d x\sqrt{\sum_j\int_\T \d\upsilon\sum_i\left|h_{i,j}\p{x,\upsilon}\right|^2}.
\end{align}
Since $g_{i,j}, h_{i,j}$ were in $\honelast$ in the first variable and respectively $H^1,H^2$ in the second, $\tilde{g}_{i,j},\tilde{h}_{i,j}$ are repsectively in $H^1\p{\T,H^1\p{\T}}$, $H^1\p{\T,H^2\p{\T}}$. Also, by plugging the above inequalities into \eqref{eq:havehad}, they satisfy
\beq \label{eq:youhad} 1\gtrsim \int_{\T^\N}\d x\sum_j\int_\T \d\upsilon\sqrt{\sum_i\left|\tilde{g}_{i,j}\p{x_i,\upsilon}\right|^2}+  \int_{\T^\N}\d x\sqrt{\sum_j\int_\T \d\upsilon\sum_i\left|\tilde{h}_{i,j}\p{x_i,\upsilon}\right|^2}.\eeq
By J-Sch + P/B at each $j,\upsilon$, 
\beq \label{eq:rolling} \int_{\T^\N}\d x\sqrt{\sum_i\left| \tilde{g}_{i,j}\p{x_i,\upsilon}\right|^2}\gtrsim \sum_i \int_\T\d\xi \left|a_{i,j}\p{\xi,\upsilon}\right| + \sqrt{ \sum_i \int_\T\d\xi \left|d_{i,j}\p{\xi,\upsilon}\right|^2 },\eeq
where 
\beq \label{eq:inthe} a_{i,j}+d_{i,j}=\tilde{g}_{i,j}\eeq
and $a_{i,j}, d_{i,j}$ are analytic in the first variable. Integrating \eqref{eq:rolling} with respect to $\upsilon$ and summing over $j$, we get 
\begin{align} &\int_{\T^\N}\d x\sum_j\int_\T\d\upsilon\sqrt{\sum_i\left| \tilde{g}_{i,j}\p{x_i,\upsilon}\right|^2}\gtrsim\\
&\label{eq:deep}\sum_j\int_\T\d\upsilon \sum_i \int_\T\d\xi \left|a_{i,j}\p{\xi,\upsilon}\right| + \sum_j\int_\T\d\upsilon \sqrt{ \sum_i \int_\T\d\xi \left|d_{i,j}\p{\xi,\upsilon}\right|^2 }.\end{align}
Here, again, we prove the above first for $\tilde{g}_{i,j}$ being simple functions in $\upsilon$ and argue by density. For each $j$, we can now invoke the $K$-closedness of
\beq H^1\p{\T, \ell^1\p{H^1\p{\T}}}, H^1\p{\T, \ell^2\p{H^2\p{\T}}}\eeq
in 
\beq L^1\p{\T, \ell^1\p{H^1\p{\T}}}, L^1\p{\T, \ell^2\p{H^2\p{\T}}}\eeq
granted by Corollary \ref{kclh1last} (here, $\xi$ corresponds to the inner $\T$ and $\upsilon$ to the outer) and obtain $a_{i,j},d_{i,j}\in \honeall\p{\T^2}$ satisfying \eqref{eq:inthe} and \eqref{eq:deep}. In order to obtain the other two summands, we apply Hilbert-valued J-Sch+P/B to the sequence of functions $\T\ni\xi\mapsto \p{\tilde{h}_{i,j}\p{\xi,\cdot}}_{j\in \N}\in \ell^2\p{H^2\p{\T}}$ indexed by $i$, which produces a decomposition
\beq c_{i,j}+b_{i,j}=\tilde{h}_{i,j}\eeq
of $\tilde{h}_{i,j}$ into two summands, which retains analyticity in the first variable as well as the space of values, and satisfies
\begin{align} \label{eq:myheart}&\int_{\T^\N}\d x\sqrt{\sum_j\int_\T \d\upsilon\sum_i\left|\tilde{h}_{i,j}\p{x_i,\upsilon}\right|^2}\gtrsim\\
& \sum_i \int_\T\d\xi \sqrt{\sum_j\int_\T\d\upsilon \left|c_{i,j}\p{\xi,\upsilon}\right|^2} + \sqrt{\sum_i \int_\T\d\xi \sum_j\int_\T\d\upsilon \left|b_{i,j}\p{\xi,\upsilon}\right|^2}.
\end{align}
Plugging \eqref{eq:myheart} and \eqref{eq:deep} into \eqref{eq:youhad}, we see that the decomposition $f_{i,j}= a_{i,j}+b_{i,j}+c_{i,j}+d_{i,j}$ satisfies the desired inequality.
\end{proof}

\begin{thm}\label{mainh12msum}Let $\mathcal{H}$ be a separable Hilbert space and $f_{i_1,\ldots,i_m}\in \honeall\p{\T^m,\mathcal{H}}$ for $i=\p{i_1,\ldots,i_m}\in\N^m$. Then
\beq \overbrace{\int_{\T^\N}\cdots\int_{\T^\N}}^{m}\sqrt{\sum_{i}\left\|f_i\p{x^{(1)}_{i_1},\ldots,x^{(m)}_{i_m}}\right\|_\mathcal{H}^2}\d x^{(1)}\ldots \d x^{(m)}\simeq_m \eeq
\beq \inf_{\substack{f_i=\sum_{A\dot\cup B=[1,m]}q^{(A,B)}_i\\ q^{(A,B)}_i\in \honeall\p{\T^m,\mathcal{H}} }}\sum_{A\dot\cup B=[1,m]} \sum_{i_A\in \N^A} \int_{\T^A} \d \xi_A \sqrt{ \sum_{i_{B}\in \N^{B}}\int_{\T^{B}}\d\xi_{B}\left\|q^{(A,B)}_{i_A,i_{B}}\p{\xi_A,\xi_{B}}\right\|_\mathcal{H}^2}.\eeq
\end{thm}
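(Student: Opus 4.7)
We proceed by induction on $m$. The ``$\lesssim$'' direction follows with constant $1$ via the same telescoping Minkowski argument along the coordinates in $A$ that appears in \cite{diss}. The base case $m=1$ is the $\ell^2$-valued Johnson--Schechtman theorem combined with the vector-valued Pisier--Bourgain result, while the case $m=2$ is the previous theorem, whose proof adapts verbatim to $\mathcal{H}$-valued data.

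For the inductive step $m-1\to m$, our plan is to peel off the last coordinate $x^{(m)}$ and defer the first $m-1$ coordinates to the induction hypothesis, following the template of the $m=2$ proof. We first assemble the data into the single $\ell^2\p{\N^{m-1},\mathcal{H}}$-valued sequence
\beq F_{i_m}\p{x^{(1)},\ldots,x^{(m-1)},\upsilon}:=\p{f_i\p{x^{(1)}_{i_1},\ldots,x^{(m-1)}_{i_{m-1}},\upsilon}}_{i_1,\ldots,i_{m-1}\in\N}\eeq
and apply the $\ell^2\p{\N^{m-1},\mathcal{H}}$-valued Johnson--Schechtman inequality in $\upsilon$, yielding a decomposition $F_{i_m}=G_{i_m}+H_{i_m}$. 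We then upgrade analyticity in $\upsilon$ via Pisier--Bourgain and analyticity of the first $m-1$ variables (in the $\honelast$ sense on $\p{\T^\N}^{m-1}$ regarded as a single product) via Corollary \ref{kclh1last}. Iterating Lemma \ref{diagproj} once in each of the independent copies of $\T^\N$ corresponding to $x^{(1)},\ldots,x^{(m-1)}$ replaces the pair $\p{G_{i_m},H_{i_m}}$ by functions $\tilde g_i,\tilde h_i\in\honeall\p{\T^m,\mathcal{H}}$ with $f_i=\tilde g_i+\tilde h_i$, where $\tilde g_i$ carries the ``$\ell^1L^1$''-type structure in $\p{i_m,\xi_m}$ and $\tilde h_i$ the ``$\ell^2L^2$''-type structure.

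Next, we apply the induction hypothesis separately to $\tilde g_i$ and $\tilde h_i$ in the first $m-1$ coordinates. For $\tilde h_i$, the value space $H^2\p{\T_{\xi_m},\mathcal{H}}$ remains a separable Hilbert space, so the induction hypothesis applies directly and produces an $\honeall\p{\T^m,\mathcal{H}}$-decomposition indexed by partitions $A'\dot\cup B'=[1,m-1]$; these pieces account for the terms of the final sum with $m\in B$. For $\tilde g_i$, the value space $H^1\p{\T_{\xi_m},\mathcal{H}}$ is no longer Hilbert, so we instead apply the induction hypothesis to the slice $\tilde g_i\p{\cdot,\xi_m}$ for each fixed $\xi_m$ (whose value space is simply $\mathcal{H}$) and promote the resulting pointwise decomposition to one analytic in $\xi_m$ via Corollary \ref{kcleab}. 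The relevant identification is that, after the $\ell^1$-$L^1$ Fubini and the analytic projection supplied by the corollary, the norm $\ell^1\p{\N_{i_m},H^1\p{\T_{\xi_m},E_{A',B'}\p{\mathcal{H}}}}$ coincides with $E_{A'\cup\{m\},B'}\p{\mathcal{H}}$, producing the terms of the final sum with $m\in A$. The union of the two families exhausts all partitions $A\dot\cup B=[1,m]$.

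The main obstacle is this last step for $\tilde g$: promoting a slicewise $(m-1)$-variable decomposition to a globally $\xi_m$-analytic one requires identifying the correct K-closed couple of lattice-valued Hardy spaces and verifying that Corollary \ref{kcleab} supplies it. Measurability issues arising from the slicewise construction will be handled by the same dyadic density argument used in the $m=2$ proof, approximating first by data that are simple in $\xi_m$ and piecewise constant on the first $m-1$ factors. The iterated application of Lemma \ref{diagproj} across independent copies of $\T^\N$ is routine, since the relevant conditional expectations commute across different copies.
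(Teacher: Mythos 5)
The high-level orientation in your proposal is inverted from the paper's, and this inversion is not cosmetic: it creates a real gap in the place you explicitly mark as routine. The paper packages the \emph{new} index $i_0$ into the value space $\ell^2\p{\N,\mathcal{H}}$ and applies the induction hypothesis \emph{first}, so that at the end only a single copy of $\T^\N$ remains as a free variable; the functions $G^{(A,B)}_i$ land in $\honelast\p{\T^\N,E_{A,B}}$ via Corollary \ref{kcleab}, and \emph{exactly one} application of Lemma \ref{diagproj} collapses $\T^\N$ to $\T$. You instead apply Johnson--Schechtman/Pisier--Bourgain first, keeping all $m-1$ copies of $\T^\N$ in play, and then plan to collapse them by ``iterating Lemma \ref{diagproj} once in each of the independent copies,'' declaring this routine.

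This is where the proof breaks. Lemma \ref{diagproj} is stated for $\honelast\p{\T^\N,\ell^2\p{\N,\mathcal{H}}}$; the diagonal projection is a square-function/martingale argument that genuinely needs the $\geqlast$-analyticity on the copy of $\T^\N$ being projected. After your Johnson--Schechtman step, $G_{i_m}, H_{i_m}$ are only measurable in $\p{x^{(1)},\ldots,x^{(m-1)}}$; Corollary \ref{kclh1last} upgrades them to $\honelast\p{\p{\T^\N}^{m-1},\cdot}$ for one total order on $\p{\Z^{\oplus\N}}^{m-1}$. But $\honelast$ for any such order gives slicewise $\geqlast$-analyticity in at most one of the factors — say $x^{(1)}$, if you put those coordinates at the top of the order. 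After applying $\E_{\{i_1\}}\otimes\id$ in $x^{(1)}$, the resulting functions are in general \emph{not} slicewise $\honelast$ in $x^{(2)}$: a character whose $x^{(1)}_{i_1}$-frequency is positive can carry arbitrary sign in $x^{(2)}$. So the second application of Lemma \ref{diagproj} is not legitimate, the conditional expectations commute but the hypotheses of the lemma do not propagate, and the iteration collapses at step two. Repairing this would require either a genuinely multi-indexed analogue of Lemma \ref{diagproj} (not available in the paper, and not obviously a ``straightforward analogue'' of the one-variable Hardy-martingale square-function theorem), or interleaving a fresh K-closedness step before every projection — but the lattices that arise after the first projection, such as $\ell^1\p{\N_{i_m}, H^1\p{\T_\upsilon, H^1\p{\T_{\xi_1}, \ell^2}}}$, are no longer sequence lattices in the sense required by the Kislyakov--Xu theorem and the ``Xu$+\varepsilon$'' lemma, so neither Corollary \ref{kcleab} nor \ref{kclh1last} applies as stated.

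Your treatment of the two branches is otherwise reasonable: for $\tilde h$, invoking the induction hypothesis with the Hilbert value space $H^2\p{\T,\mathcal{H}}$ correctly produces the $m\in B$ terms, and for $\tilde g$, the slicewise application followed by promotion via the $E_{A',B'}$-lattice version of $K$-closedness is the right idea and matches the paper's use of Corollary \ref{kcleab}. But the ``peel off the last coordinate, decompose it first, then defer to induction'' architecture forces the multi-copy diagonal projection, which is the step that the paper's ``induct first, then split the single new coordinate'' architecture was designed to avoid. I would either switch to the paper's order of operations, or, if you want to keep your order, state and prove the multi-copy diagonal projection lemma you actually need rather than asserting it is routine.
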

\begin{proof}
The inequality ``$\leq$'' has already been handled in the non-analytic case. For the ``$\gtrsim$'' inequality, we will proceed by induction. The case $m=1$ is just the usual J-Sch+P/B. Let us assume that the theorem is true for some $m$. \par
Suppose $f_{i_0,\ldots,i_m}\in \honeall\p{\T^{m+1},\mathcal{H}}$ for $i_0,\ldots,i_m\in \N$. We define 
\beq F_{i_1,\ldots,i_m}\in \honeall\p{\T^\N\times \T^m,\ell^2\p{\N,\mathcal{H}}}\eeq
by
\beq F_{i_1,\ldots,i_m}\p{x,\xi}=\p{f_{i_0,i_1,\ldots,i_m}\p{x_{i_0},\xi_1,\ldots,\xi_m}}_{i_0\in\N}.\eeq
We have 
\begin{align} &\int_{\p{\T^\N}^{m+1}}\sqrt{\sum_{i_0,\ldots,i_m}\left\|f_{i_0,\ldots,i_m}\p{ x^{(0)}_{i_0},\ldots,x^{(m)}_{i_m}}\right\|_{\mathcal{H}}^2}\d x^{(0)}\ldots\d x^{(m)} \\
=& \int_{\T^\N}\int_{\p{\T^\N}^m}\sqrt{ \sum_{i_1,\ldots,i_m}\left\|F_{i_1,\ldots,i_m}\p{ y,x^{(1)}_{i_1},\ldots,x^{(m)}_{i_m}}\right\|_{\ell^2\p{\N,\mathcal{H}}}^2 }\d x^{(1)}\ldots \d x^{(m)}\d y.\end{align}
For a fixed value of $y$, the inside integration is, by $\ell^2\p{\N,\mathcal{H}}$-valued induction hypothesis ($\N$ corresponds to $i_0$), equivalent to 
\beq \left\| \p{F_i\p{y,\cdot}}_{i\in \N^m} \right\|_{\sum_{A\dot\cup B=[1,m]} E_{A,B}\p{\ell^2\p{\N,\mathcal{H}}}}.\eeq
Each $F_i$ is in $\honeall$ in the first variable (the one in $\T^\N$), so in particular in $\honelast$. Corollary \ref{kcleab} guarantees 
\beq \honelast\p{ \sum_{A\dot\cup B=[1,m]} E_{A,B}\p{\ell^2\p{\N,\mathcal{H}}} } \subseteq \sum_{A\dot\cup B=[1,m]} \honelast\p{E_{A,B}\p{\ell^2\p{\N,\mathcal{H}}}},\eeq
which produces a decomposition
\beq\label{eq:FeqsumG} F_i=\sum_{A\dot\cup B=[1,m]}G^{(A,B)}_i\eeq
of each $F_i$ into summands $G^{(A,B)}_i\in \honelast\p{\T^\N,\honeall\p{\T^m,\ell^2\p{\N,\mathcal{H}}}}$ such that
\begin{align} & \int_{\T^\N}\int_{\p{\T^\N}^m}\sqrt{ \sum_{i_1,\ldots,i_m}\left\|F_{i_1,\ldots,i_m}\p{ y,x^{(1)}_{i_1},\ldots,x^{(m)}_{i_m}}\right\|_{\ell^2\p{\N,\mathcal{H}}}^2 }\d x^{(1)}\ldots \d x^{(m)}\d y\\
=& \left\| \p{F_i}_{i\in \N^m}\right\|_{\honelast\p{ \sum_{A\dot\cup B=[1,m]} E_{A,B}\p{\ell^2\p{\N,\mathcal{H}}} }}\\
\gtrsim & \sum_{A\dot\cup B=[1,m]}\left\|\p{G^{(A,B)}_i}_{i\in \N^m}\right\|_{ \honelast\p{\T^\N,E_{A,B}\p{\ell^2\p{\N,\mathcal{H}}} } }.\end{align}
For $i_0\in \N$, $i_{\geq 1}\in \N^m$, let $g^{(A,B)}_{i_0,i_{\geq 1}}\in \honelast\p{\T^\N,\honeall\p{\T^m,\mathcal{H}}}$ be the $i_0$-th coordinate of $G^{(A,B)}_i$ with respect to the innermost $\N$. By \eqref{eq:FeqsumG},
\beq \sum_{A\dot\cup B=[1,m]}g^{(A,B)}_{i_0,i_{\geq 1}}\p{y,\xi }= f_{i_0,i_{\geq 1}}\p{y_{i_0},\xi}\eeq
for $\p{i_0,i_{\geq 1}}= \p{i_0,\ldots,i_m}\in \N^{m+1}$, $y\in \T^\N$, $\xi\in \T^m$. In particular, the left hand side depends only on the $i_0$-th entry of the coordinate from $\T^\N$. Applying $\E_{\{i_0\}}\otimes \id$ to both sides we get
\beq \label{eq:sumtildeg}\sum_{A\dot\cup B=[1,m]} \tilde{g}^{(A,B)}_{i_0,i_{\geq 1}}\p{\upsilon,\xi }= f_{i_0,i_{\geq 1}}\p{\upsilon,\xi}\eeq
for $\upsilon\in\T$, where $\tilde{g}^{(A,B)}_{i_0,i_{\geq 1}}$ is defined by
\beq \p{\E_{\{i_0\}}\otimes \id}{g}^{(A,B)}_{i_0,i_{\geq 1}}\p{y,\xi }= \tilde{g}^{(A,B)}_{i_0,i_{\geq 1}}\p{y_{i_0},\xi }\eeq
and therefore belongs to $\honeall\p{\T^{m+1},\mathcal{H}}$, because, for fixed $\xi$, $\p{\E_{\{i_0\}}\otimes \id}{g}^{(A,B)}_{i_0,i_{\geq 1}}$ belongs to $\honelast \cap V^1_1\subset \honeall$ as a function of the first variable. Let us denote 
\beq \tilde{\mathcal{H}}_B= \ell^2\p{\N^B,H^2_{\mathrm{all}}\p{\T^B,\mathcal{H}}}.\eeq
For each $i_0,i_A$, we can treat $\p{\tilde{g}_{i_0,i_A,i_B}}_{i_B\in \N^B}=\tilde{g}_{i_0,i_A}$ as an element of $\honeall\p{\T^A,\tilde{\mathcal{H}}_B}$ in a natural way. For fixed $A,B$, by $\tilde{\mathcal{H}}_B$-valued Lemma \ref{diagproj} applied at each $i_A,\xi_A$,
\begin{align} &\left\|\p{G^{(A,B)}_i}_{i\in \N^m}\right\|_{ \honelast\p{\T^\N,E_{A,B}\p{\ell^2\p{\N,\mathcal{H}}} } } \\
= & \int_{\T^\N}\d y \sum_{i_A\in \N^A} \int_{\T^A}\d \xi_A \sqrt{\sum_{i_B\in \N^B}\int_{\T^B}\d \xi_B \sum_{i_0\in \N} \left\|g^{(A,B)}_{i_0,i_A,i_B}\p{y,\xi_A,\xi_B}\right\|_{\mathcal{H}}^2}\\
\gtrsim & \int_{\T^\N}\d y \sum_{i_A\in \N^A} \int_{\T^A}\d \xi_A \sqrt{\sum_{i_B\in \N^B}\int_{\T^B}\d \xi_B \sum_{i_0\in \N} \left\|\p{\E_{\{i_0\}}\otimes\id}g^{(A,B)}_{i_0,i_A,i_B}\p{y,\xi_A,\xi_B}\right\|_{\mathcal{H}}^2} \\
= & \int_{\T^\N}\d y \sum_{i_A\in \N^A} \int_{\T^A}\d \xi_A \sqrt{\sum_{i_B\in \N^B}\int_{\T^B}\d \xi_B \sum_{i_0\in \N} \left\|\tilde{g}^{(A,B)}_{i_0,i_A,i_B}\p{y_{i_0},\xi_A,\xi_B}\right\|_{\mathcal{H}}^2}\\
=&  \sum_{i_A\in \N^A} \int_{\T^A}\d \xi_A \int_{\T^\N}\d y \sqrt{\sum_{i_0\in \N} \left\|\tilde{g}^{(A,B)}_{i_0,i_A}\p{y_{i_0},\xi_A}\right\|_{\tilde{\mathcal{H}}_B}^2}.\end{align}
For fixed $i_A,\xi_A$, the expression beginning with $\int_{\T^\N}$ is the interpolation sum of $\ell^1\p{\N,H^1\p{\T,\tilde{\mathcal{H}}_B}}$ and $\ell^2\p{\N,H^2\p{\T,\tilde{\mathcal{H}}_B}}$ by the usual J-Sch+P/B. Together with Proposition \ref{trivsumL1} this gives a decomposition
\beq \label{eq:gtileqphipsi}\tilde{g}^{(A,B)}_{i_0,i_A}= \varphi^{(A,B)}_{i_0,i_A} + \psi^{(A,B)}_{i_0,i_A}\eeq
such that 
\beq \varphi^{(A,B)}_{i_0,i_A}\in L^1\p{\T^A,H^1\p{\T,\tilde{\mathcal{H}}_B}},\quad \psi^{(A,B)}_{i_0,i_A}\in L^1\p{\T^A,H^2\p{\T,\tilde{\mathcal{H}}_B}}\eeq
and
\begin{align} 
 &\sum_{i_A\in \N^A} \int_{\T^A}\d \xi_A \int_{\T^\N}\d y \sqrt{\sum_{i_0\in \N} \left\|\tilde{g}^{(A,B)}_{i_0,i_A}\p{y_{i_0},\xi_A}\right\|_{\tilde{\mathcal{H}}_B}^2}\\
\gtrsim & \sum_{i_A\in \N^A} \int_{\T^A}\d \xi_A \sum_{i_0\in \N}\int_{\T}\d \upsilon \left\|\varphi^{(A,B)}_{i_0,i_A}\p{\upsilon,\xi_A}\right\|_{\tilde{\mathcal{H}}_B} +\\
 & \sum_{i_A\in \N^A} \int_{\T^A}\d \xi_A \sqrt{\sum_{i_0\in \N}\int_{\T}\d \upsilon \left\|\psi^{(A,B)}_{i_0,i_A}\p{\upsilon,\xi_A}\right\|_{\tilde{\mathcal{H}}_B}^2}.\end{align} 
By Corollary \ref{onemorecor}, $\varphi^{(A,B)}_{i_0,i_A}$ and $\psi^{(A,B)}_{i_0,i_A}$ can be chosen to be in $\honeall$ in the second variable (the one in $\T^A$) as well. Taking 
\beq q^{(A\cup\{0\},B)}_{i_0,i_{\geq 1}}\p{\xi_0,\xi_{\geq 1}}= \p{\varphi^{(A,B)}_{i_0,i_A}\p{\xi_0,\xi_A}}_{i_B}\p{\xi_B},\eeq
\beq q^{(A,B\cup\{0\})}_{i_0,i_{\geq 1}}\p{\xi_0,\xi_{\geq 1}}= \p{\psi^{(A,B)}_{i_0,i_A}\p{\xi_0,\xi_A}}_{i_B}\p{\xi_B}\eeq
gives a decomposition of $f_i$, because 
\beq \sum_{A\dot\cup B=[0,m]}q_i^{(A,B)}=f_i\eeq
by \eqref{eq:sumtildeg} and \eqref{eq:gtileqphipsi}. Moreover, 
\beq \sum_{i_A\in \N^A} \int_{\T^A}\d \xi_A \sum_{i_0\in \N}\int_{\T}\d \upsilon \left\|\varphi^{(A,B)}_{i_0,i_A}\p{\upsilon,\xi_A}\right\|_{\tilde{\mathcal{H}}_B}= \left\|\p{q^{(A\cup\{0\},B)}_{i}}_{i\in \N^{m+1}}\right\|_{E_{A\cup\{0\},B}(\mathcal{H})},\eeq
\beq \sum_{i_A\in \N^A} \int_{\T^A}\d \xi_A \sqrt{\sum_{i_0\in \N}\int_{\T}\d \upsilon \left\|\psi^{(A,B)}_{i_0,i_A}\p{\upsilon,\xi_A}\right\|_{\tilde{\mathcal{H}}_B}^2}= \left\|\p{q^{(A,B\cup\{0\})}_{i}}_{i\in \N^{m+1}}\right\|_{E_{A,B\cup\{0\}}(\mathcal{H})},\eeq
which proves that $q^{(A,B)}_i$ for $A\dot\cup B=[0,m]$ satisfy the desired inequality. 

\end{proof}

\section{Interpolation of $\p{U^1_m\cap\honeall\p{\T^\N}}\p{\ell^p}$}
It has been proved in \cite{xul1h1} that for any $X\subset L^1$, the property
\beq \label{eq:BPX}\p{X\p{\ell^1},X\p{\ell^2}}\text{ is }K\text{-closed in }\p{L^1\p{\ell^1},L^1\p{\ell^2}}\eeq
implies that $L^1/X$ has cotype 2 and every operator $L^1/X\to \ell^2$ is 1-summing. In \cite{diss}, we used the decomposition theorem to prove that $U^1_m$ satisfies \eqref{eq:BPX}. Here, we prove the same for $U^1_m\cap \honeall$, based on that reasoning. 
\begin{thm} The space $\p{U^1_m\cap \honeall}\p{\T^\N}\subset L^1\p{\T^\N}$ satisfies \eqref{eq:BPX}. \end{thm}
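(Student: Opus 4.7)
The strategy is to mirror the approach of \cite{diss} for $U^1_m$, using Theorem \ref{mainh12msum} in place of its non-analytic counterpart. Let $X := \p{U^1_m \cap \honeall}\p{\T^\N}$, and suppose $\p{f_n}_n$ is a sequence with each $f_n \in X$ admitting a decomposition $\p{f_n} = \p{g_n} + \p{h_n}$ in $L^1\p{\ell^1} + L^1\p{\ell^2}$; the goal is to find $\tilde{g}_n, \tilde{h}_n \in X$ summing to $f_n$ with $\n{\p{\tilde{g}_n}}_{L^1\p{\ell^1}} + \n{\p{\tilde{h}_n}}_{L^1\p{\ell^2}} \lesssim \n{\p{g_n}}_{L^1\p{\ell^1}} + \n{\p{h_n}}_{L^1\p{\ell^2}}$. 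The first step is to apply Theorem \ref{mainh12msum} in two ways to realize the $X\p{\ell^p}$-norm for $p \in \{1,2\}$ as an interpolation sum of $E_{A,B}\p{\ell^p}$-norms. For $p = 2$, take $\mathcal{H} = \ell^2\p{\N}$ indexed by the sequence coordinate $n$; this yields a decomposition $f_n = \sum_{A \dot\cup B = [1,m]} q^{(A,B),n}$ with $\p{q^{(A,B),n}}_n \in E_{A,B}\p{\ell^2}$ and total norm bounded by $\n{\p{f_n}}_{L^1\p{\ell^2}} \leq \n{\p{g_n}}_{L^1\p{\ell^1}} + \n{\p{h_n}}_{L^1\p{\ell^2}}$ (via $\ell^1 \hookrightarrow \ell^2$). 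For $p = 1$, apply Theorem \ref{mainh12msum} coordinate-by-coordinate with $\mathcal{H} = \mathbb{C}$ and sum over $n$, using the identification $\ell^1\p{E_{A,B}} = E_{A,B}\p{\ell^1}$.

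Next, for each $A \dot\cup B = [1,m]$, the space $E_{A,B}$ is isomorphic (via the basis supplied by the Xu+$\varepsilon$ Lemma in Section 2) to a Banach lattice with UMD $2$-convexification; by Xu \cite{xul1h1}, such lattices satisfy property \eqref{eq:BPX}, so the couple $\p{E_{A,B}\p{\ell^1}, E_{A,B}\p{\ell^2}}$ is $K$-closed in $\p{L^1\p{\ell^1}, L^1\p{\ell^2}}$. Combining these $K$-closedness statements across all $(A,B)$ requires the family $K$-closedness of Corollary \ref{kcleab}, which permits the ambient decomposition $\p{f_n} = \p{g_n} + \p{h_n}$ to be transferred into compatible decompositions $q^{(A,B),n} = u^{(A,B),n} + v^{(A,B),n}$ with $\p{u^{(A,B),n}}_n \in E_{A,B}\p{\ell^1}$ and $\p{v^{(A,B),n}}_n \in E_{A,B}\p{\ell^2}$. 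Setting $\tilde{g}_n := \sum_{A,B} u^{(A,B),n}$ and $\tilde{h}_n := \sum_{A,B} v^{(A,B),n}$ then yields the desired decomposition of $f_n$; both $\tilde{g}_n$ and $\tilde{h}_n$ lie in $X$ since each summand is in $\honeall\p{\T^m}$ and its recoupling to $\T^\N$ remains in $U^1_m \cap \honeall$.

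The main obstacle is the simultaneous transfer in the previous paragraph: a naive application of Xu's $K$-closedness to each $E_{A,B}$ couple would require a pre-existing ambient $\ell^1$-versus-$\ell^2$ splitting of each $\p{q^{(A,B),n}}_n$, which the data does not directly provide — indeed, Theorem \ref{mainh12msum} alone produces pieces that live only on the $\ell^2$ side. The bridge is Corollary \ref{kcleab}, encoding the family $K$-closedness of $\p{\honelast\p{\T^\N, E_{A,B}}}_{A,B}$ in $\p{L^1\p{\T^\N, E_{A,B}}}_{A,B}$: this multi-$K$-closedness is precisely what allows the \emph{single} ambient decomposition $\p{g_n} + \p{h_n} = \p{f_n}$ to be split into simultaneously compatible decompositions across the family of pieces $\{q^{(A,B),n}\}_{A,B}$, after which the per-lattice BPX from Step~2 can be applied uniformly and the summands reassembled into $\tilde{g}_n, \tilde{h}_n \in X$.
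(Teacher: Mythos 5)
The proposal does not reach the goal, and the gap is in the second and third paragraphs. Property \eqref{eq:BPX} as defined in the paper is a property of a \emph{subspace} $X\subset L^1$; the claim that the lattice $E_{A,B}$ ``satisfies property \eqref{eq:BPX}'' is a category error, and the putative couple $\p{E_{A,B}\p{\ell^1},E_{A,B}\p{\ell^2}}$ is not even defined (the paper only introduces $E_{A,B}\p{\mathcal H}$ for $\mathcal H$ Hilbert, so $E_{A,B}\p{\ell^1}$ has no meaning). Corollary~\ref{kcleab} does not help you either: it concerns $K$-closedness of $\honelast\p{\T^\N,E_{A,B}}$ in $L^1\p{\T^\N,E_{A,B}}$ across the family of $(A,B)$, which is an analyticity-versus-measurability statement, not an $\ell^1$-versus-$\ell^2$ splitting. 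There is no step in your argument that actually uses the \emph{given} ambient decomposition $\p{f_n}=\p{g_n}+\p{h_n}$; the bound $\n{\p{f_n}}_{L^1\p{\ell^2}}\leq\n{\p{g_n}}_{L^1\p{\ell^1}}+\n{\p{h_n}}_{L^1\p{\ell^2}}$ discards the $\ell^1$ information entirely and can only work at the scale $t=1$ after rescaling, which the proposal does not set up.

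The missing idea is that the sequence index has to be promoted to an additional \emph{variable}. The paper replaces $\p{\ell^1,\ell^2}$ by $\p{L^1(\R),L^2(\R)}$, reduces to $t=1$ by dilation in the $\R$-variable, and writes each $f_i$ as $\bigsqcup_k f_{i,k}$ with $f_{i,k}$ a function on $\T^m\times[0,1]$. The $K$-functional at $t=1$ then becomes the square-function expression \eqref{eq:kfunl1l1l1l2} over $m+1$ variables. Now Theorem~\ref{mainh12msum} is applied in $m+1$ variables (after multiplying by a character to be analytic in the last one), producing pieces $a_{i,k}^{(A,B)}$ with $A\dot\cup B=[1,m+1]$. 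The crucial split is then \emph{by whether $m+1\in A$ or $m+1\in B$}: those two groups of pieces are reassembled into the $L^1\p{L^1}$-component $g$ and the $L^1\p{L^2}$-component $h$, respectively. This is where the $\ell^1$/$\ell^2$ separation actually comes from — from the $\ell^1$-over-$A$ versus $\ell^2$-over-$B$ structure inside $E_{A,B}$ — and it has no analogue in your proposal, which never varies the last index between the two sides.
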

\begin{proof}[Proof (sketch).]We follow closely the proof of the analogous theorem in \cite{diss}. We will prove the $K$-closedness of $L^1(\R)$- and $L^2(\R)$-valued spaces instead. Let $f\in \p{U^1_m\cap \honeall}\p{\T^\N,L^1\p{\R}}+ \p{U^1_m\cap \honeall}\p{\T^\N,L^2(\R)}$. First, we check by rescaling in the second variable that it is enough to check the $K$-functional inequality for $t=1$. Then, by the fact that $f$ is $U^1_m$ in the variable in $\T^\N$, i.e. it has a representation 
\beq f(x)=\sum_{i_1<\ldots<i_m} f_{i_1,\ldots,i_m}\p{x_{i_1},\ldots,x_{i_m}}\eeq
where $f_{i}$ is $U^1_m\p{\T^m}$ and $f_{i}=\bigsqcup_k f_{i,k}$, we copy the old result
\beq \label{eq:kfunl1l1l1l2}K\p{f,1;L^1\p{L^1},L^1\p{L^2}}\simeq \int_{[0,1]^\N}\int_{\p{\T^\N}^m} \d s\d x \p{\sum_{i,k} \left|f_{i,k}\p{x^{(1)}_{i_1},\ldots,x^{(m)}_{i_m},s_k}\right|^2}^\frac12.\eeq 
By density we can assume that $f_{i,k}$ is a trigonometric polynomial in the last variable. Then, multiplying by a suitable character, using Theorem \ref{mainh12msum} and multiplying by the inverse of that character, we get a decomposition 
\beq f_{i,k}=\sum_{A\dot\cup B\subset [1,m+1]} a_{i,k}^{(A,B)}\eeq
into summands $a_{i,k}^{(A,B)}\in \honeall\p{\T^m,L^1[0,1]}$ satisfying the usual inequality 
\beq \sum_{A\dot\cup B=[1,m]}\left\|\bigsqcup_{i,k}a^{(A,B)}_{i,k}\right\|_{H^1_{\mathrm{all}}\p{\T^A,\ell^1\p{\N^A,H^2_{\mathrm{all}}\p{\T^B,\ell^2\p{\N^B}}}}}\lesssim \eqref{eq:kfunl1l1l1l2}\eeq
(the same reasoning can be use to recover the decomposition theorem in the Lebesgue case of \cite{diss} from the analytic one proved here). Then we proceed by taking \beq g_{i}=\bigsqcup_k \sum_{m+1\in A} a_{i,k}^{(A,B)},\quad h_{i}=\bigsqcup_k \sum_{m+1\in B} a_{i,k}^{(A,B)}.\eeq
This produces a decomposition of $f$ into summands belonging to $U^1_m\cap\honeall$ in the $\T^\N$ variable that satisfies
\beq \|g\|_{L^1\p{L^1}}+\|h\|_{L^1\p{L^2}}\lesssim \eqref{eq:kfunl1l1l1l2}\eeq
by the old calculation.
\end{proof}


\begin{thebibliography}{ams}
\bibitem{bourginterp} J. Bourgain, \textit{Some consequences of Pisier's approach to interpolation}, Israel Journal of Mathematics 77 (1992), 165-185
\bibitem{bourgwalsh} J. Bourgain, \textit{Walsh subspaces of $L^p$-product spaces}, Séminaire Analyse fonctionnelle (dit "Maurey-Schwartz") (1979-1980): 1-14. 
\bibitem{JSch} W. B. Johnson, G. Schechtman, \textit{Sums of independent random variables in rearrangement invariant function spaces}, Ann. Probab. 17 (1989), 789-808.
\bibitem{KX} S. Kislyakov, Q. Xu, \textit{Interpoplation of weighted and vector-valued Hardy spaces}, Trans. Amer. Math. Soc. Volume 343, Number 1, May 1994
\bibitem{pisierinterp} G. Pisier, \textit{Interpolation between $H^p$ spaces and non-commutative generalizations. I}, Pacific Journal of Mathemtics Vol. 155(2), 1992
\bibitem{diss}M. Rzeszut, \textit{Higher order independent sums in product $L^1$ and  $H^1$ spaces}, Kent State University, doctoral dissertation
\bibitem{xul1h1} Q. Xu, \textit{Some properties od the quotient space $L^1\left(\T^d\right)/H^1\left(\T^d\right)$}, Illinois Journal of Mathematics, Vol. 37 (3), 1993

\bibitem{wojtaszczyk} P. Wojtaszczyk, \textit{Franklin system is an unconditional basis in $H^1$}, Ark. Mat. 20(2) (1982) 293–300.
\bibitem{zinn}J. Zinn, \textit{Comparison of martingale difference sequences} in \textit{Probability in Banach Spaces V}, Lecture Notes in Math. (1985) 1153453-457.
\end{thebibliography}
\end{document}